\newtheorem{teo}{Theorem}[section] 
\newtheorem*{teo*}{Theorem}
\newtheorem{lem}[teo]{Lemma} 
\newtheorem{prop}[teo]{Proposition}
\newtheorem{definicion}{Definition}[section]
\newcommand{\mc}{\mathcal}
\newcommand{\R}{\mathbb{R}}\newcommand{\Rn}{\R^n}\newcommand{\Rnn}{\R^{n\times n}}
\newcommand{\N}{\mathbb{N}}
\renewcommand{\O}{\Omega}
\newcommand{\g}{\gamma}
\renewcommand{\d}{\delta}
\renewcommand{\t}{\theta}
\newcommand{\p}{\partial}
\newcommand{\f}{\varphi}
\newcommand{\weakc}{\rightharpoonup}
\DeclareMathOperator{\dist}{dist}
\DeclareMathOperator{\tr}{tr}
\renewcommand{\div}{\operatorname{div}}
\DeclareMathOperator{\cof}{cof}
\DeclareMathOperator{\supp}{supp}
\DeclareMathOperator{\diver}{div}
\title{Minimizers of Nonlocal Polyconvex Energies in Nonlocal Hyperelasticity}
\author{Jos\'e C. Bellido$^a$, Javier Cueto$^b$ and Carlos Mora-Corral$^c$ \vspace*{0.25cm}\\
	{\footnotesize $^a$ E.T.S.I.\ Industriales, Department of Mathematics, Universidad de Castilla-La Mancha,} \\
	{\footnotesize 13071-Ciudad Real, Spain. Email \url{JoseCarlos.Bellido@uclm.es}},\\
	{\footnotesize $^b$ Department of Mathematics, University of Nebraska-Lincoln,} \\
	{\footnotesize Lincoln NE 68588-0130, US. Email \url{jcuetogarcia2@unl.edu}}, \\
	
	{\footnotesize $^c$ Departamento de Matem\'aticas, Universidad Aut\'onoma de Madrid,} \\
	{\footnotesize 28049 Madrid, Spain  and Instituto de Ciencias Matem\'aticas,} \\
	{\footnotesize CSIC-UAM-UC3M-UCM, 28049 Madrid, Spain. Email \url{Carlos.Mora@uam.es}}}
\date{}
\begin{document}

\maketitle

\pagestyle{empty}

\begin{abstract}
We develop a theory of existence of minimizers of energy functionals in vectorial problems based on a nonlocal gradient under Dirichlet boundary conditions.
The model shares many features with the peridynamics model and is also applicable to nonlocal solid mechanics, especially nonlinear elasticity.
This nonlocal gradient was introduced in an earlier work, inspired by Riesz' fractional gradient, but suitable for bounded domains.
The main assumption on the integrand of the energy is polyconvexity.
Thus, we adapt the corresponding results of the classical case to this nonlocal context, notably, Piola's identity, the integration by parts of the determinant and the weak continuity of the determinant.
The proof exploits the fact that every nonlocal gradient is a classical gradient. 
\end{abstract}

\noindent{\bf Keywords: } 
Nonlocal hyperelasticity, Nonlocal gradient, Polyconvexity, Peridynamics, Nonlocal Piola Identity

\noindent{\bf 2020 MSC: }
Primary:
26A33  
49J45  
74A70  
74B20  
74G65  
Secondary:
35Q74  
35R11 



\section{Introduction}

In nonlinear elasticity, and, specifically, in the static theory, a fundamental problem is the existence of minimizers of the elastic energy
\begin{equation}\label{eq:elastic}
 \int_{\O} W (x, u(x), D u (x)) \, dx
\end{equation}
of a deformation $u : \O \to \Rn$.
Here $\O$ is an open bounded subset of $\Rn$ representing the reference configuration of the body (of course, only $n=3$ is  physicallly relevant), and $W : \O \times \Rnn \to \R \cup \{\infty\}$ is the elastic stored-energy function of the material.
In fact, the dependence on $u(x)$ in \eqref{eq:elastic} is not significant.
The usual approach for finding such minimizers is the direct method of the Calculus of variations.
As shown in the pioneering paper of Ball \cite{Ball77}, the weak continuity of the determinant of the deformation gradient $D u$, and the assumption of polyconvexity of $W$ are the key ingredients to make the direct method work.
The property of polyconvexity essentially means that $W$ can be expressed as a convex function of the minors of the matrix $Du$.

This theory is by now well established.
On the other hand, nonlocal models in solid mechanics have experienced a huge development in the last two decades, especially since the introduction of the peridynamics model by Silling \cite{Silling2000}. This prominence is due among other things to the fact that their formulation usually allows for less regular functions, in particular functions exhibiting singularity phenomena.
Many refinements have been introduced since then and, particularly, nonlocal models based on a nonlocal gradient have received a great attention as an adequate substitute of local models.
In general, a nonlocal gradient of a function $u : \O \to \R$ takes the form 
\[
 \mathcal{G}_\rho u(x)= \int_\O \frac{u(x)-u(y)}{|x-y|} \frac{x-y}{|x-y|} \rho(x-y)\, dy ,
\]
for a suitable kernel $\rho$, usually with a singularity at the origin.
The choice of $\rho$ determines the nonlocal gradient, which, in turn, specifies the functional space.

The most popular nonlocal gradient is possibly Riesz' $s$-fractional gradient, which is denoted by $D^s u$ and corresponds to the choices $\O = \Rn$ and $\rho(x) = \frac{c_{n,s}}{|x|^{n-1+s}}$ for some constant $c_{n,s}$; see \cite{ShS2015,ShS2018}.
Here $0<s<1$ is the degree of differentiability.
While Riesz' fractional gradient enjoys many desirable properties (it is invariant under rotations and translations, it is $s$-homogeneous under dilations; see \cite{Silhavy2019}), it has the drawback that the integral defining it is over the whole space, which makes it unsuitable for solid mechanics where the body is represented by a bounded domain $\O \subset \Rn$.
An adaptation of Riesz' $s$-fractional gradient for bounded domains was done by the authors in \cite{BeCuMo22}.
Precisely, for a $C^{\infty}_c$ function $u$, its nonlocal gradient is defined as
\[
 D_\delta^s u(x) = c_{n,s} \int_{B(x,\delta)} \frac{u(x)-u(y)}{|x-y|}\frac{x-y}{|x-y|}\frac{w_\d(x-y)}{|x-y|^{n-1+s}} \, dy ,
\]
where $w_{\d}$ is a fixed function in $C^{\infty}_c (B(0, \d))$ satisfying some natural properties to be a truly cut-off function. The parameter $\d>0$, called \emph{horizon} in the context of peridynamics, indicates the maximum interaction distance.

Although the definition of the Riesz gradient is rather old, it was the study \cite{ShS2015,ShS2018} that initiated the attention in the community of nonlocal problems in partial differential equations and Calculus of variations.
In fact, they showed that the functional space consisting of the closure of smooth functions under the natural norm (given by the $L^p$ norms of a function $u$ and its fractional gradient $D^s u$) is the Bessel potential space $H^{s,p} (\Rn)$. They also proved a series of embeddings mimicking those of Sobolev spaces.
As a consequence of their analysis, they proved the existence of minimizers for functionals of the form
\begin{equation}\label{eq:integralDs}
 \int_{\Rn} W (x, u(x), D^s u(x)) \, dx ,
\end{equation}
where $u : \Rn \to \R$ is scalar and $W$ is convex in the last variable.
The vectorial case, for $u : \Rn \to \R^m$, was treated in \cite{BeCuMC} under the assumption of polyconvexity of $W$, and in \cite{KrSc22} under the assumption of quasiconvexity.

In \cite{BeCuMo22} we developed a framework for the nonlocal gradient $D^s_{\d} u$ over bounded domains and its associated function space $H^{s,p,\d} (\O)$, which roughly can be defined as the set of $L^p$ functions $u$ with $L^p$ gradients $D^s_{\d} u$. In this paper we use that analysis and, additionally, take advantage of the large body of knowlegde on hyperelasticity based on polyconvexity developed in the last four decades for functionals of the form \eqref{eq:elastic}.
With these two ingredients we show the existence of minimizers of functionals
\begin{equation}\label{eq:integralDsd}
 \int_{\O} W (x, u(x), D^s_{\d} u(x)) \, dx ,
\end{equation}
for $u : \Rn \to \Rn$ under the assumption of polyconvexity of $W$. This article can be seen as the goal of the path shown in the thesis \cite{Cue21}, following the articles \cite{BeCuMCBondBased,BeCuMC,bellido2020convergence,BeCuMo22}: we first showed that the nonlinear bond-based description of peridynamics does not fit in solid mechanics, then we studied the energy functionals \eqref{eq:integralDs} based on the Riesz fractional gradient as an alternative (instead of the double integrals present in peridynamics), and finally we presented this similar framework over bounded domains (see \eqref{eq:integralDsd}).

In order to obtain the theory of polyconvexity in this framework, one can follow two approaches.
One is to obtain directly the necessary tools and follow the steps from the proofs of the local case, as was done by the authors in \cite{BeCuMC}, where we developed the theory of polyconvexity for Riesz' fractional gradient.
This procedure requires the validity of Piola's identity and the integration by parts of the determinant as preliminary steps to prove the weak continuity of minors, all in the fractional context.

The other approach starts from the observation that every nonlocal gradient is in fact a local one.
This method was exploited in \cite{KrSc22} for Riesz' fractional gradient to develop the theory of quasiconvexity in that context. In the fractional case, every fractional gradient coincides with the classical one of a locally Sobolev function: $D^s u = D(I_s * u)$, being $I_s$ the Riesz potential, which is $I_s (x) = \frac{1}{\gamma(s)|x|^{n-s}}$, for some constant $\gamma(s)$.
However, the fact that the property of compact support is not preserved under convolution, as well as the non-integrability of the Riesz potential makes it difficult to use this observation for obtaining fundamental results in the polyconvexity theory based on Riesz gradients.
On the contrary, these two disadvantages are not present in the context of this paper, and that is why we adopt this approach here.
Indeed, the starting point is the equality $D^s_{\d} u = D (Q^s_{\d} * u)$ for a suitable kernel $Q^s_{\d}$, which is integrable and of compact support.
Therefore, this second method is fully operative for the polyconvexity theory in the context of the nonlocal gradient $D^s_{\d}$.
In fact, we have found two advantages: first, that it is simpler and provides shorter proofs, whenever available.
Second, and more importantly, that it gives rise to better integrability exponents, which results in weaker assumptions in the existence theorem.
This second reason is supported by the sharp results that are available for the weak continuity of the determinant in the classical case, notably due to \cite{MuQiYa94}. This sharpness of the exponents actually comes from the integration by parts of the determinant, since here it is written completely in terms of $Q_\d^s*u$, as opposed to the fractional case.

In a nutshell, the existence theory of polyconvexity is based on the weak continuity of the determinant, which in turn depends on the integration by parts of the determinant, which, in fact is founded on Piola's identity.
In order to develop our existence theory we could just start from the weak continuity of the determinant, but we have preferred to prove Piola's identity and the integration by parts of the determinant as well, for the sake of completeness and also for comparison with the local case.

The structure of this paper is as follows.
In Section \ref{se:functional} we recall from \cite{BeCuMo22} the main results about the nonlocal gradient $D^s_{\d} u$, as well as its associated functional space $H^{s,p,\d} (\O)$.
In Section \ref{se:preliminaries} we prove some properties of the nonlocal gradient and divergence, notably, the product formula.
In Section \ref{se:from} we comment on the fact that every nonlocal gradient is, in fact, a local one.
This property is exploited in Section \ref{se:Piola}, where we prove the nonlocal versions of Piola's identity, the integration by parts of $\det D^s_{\d} u$ and the weak continuity of $\det D^s_{\d} u$.
Finally, in Section \ref{se:existence} we prove the existence of minimizers of functionals \eqref{eq:integralDsd} with $W$ polyconvex.
We also show the corresponding Euler--Lagrange equation.

\section{Nonlocal gradient, divergence and associated space}\label{se:functional}

This section is a compendium of the definitions and results taken from \cite{BeCuMo22} on the nonlocal gradient and divergence, as well as their associated space $H^{s,p,\d} (\O)$.

Throughout the article, $\O$ is a non-empty bounded open set of $\Rn$.
We fix $0 < s < 1$ (the degree of differentiability) and $\d>0$ (the horizon distance).
We also define the sets $\O_{\d}= \O + B(0,\d)$, which plays the role of nonlocal closure, and $\O_{B, \d} = \O_{\d} \setminus \O$, which plays the role of nonlocal boundary.
We write $B(x,r)$ to denote the open ball centred at $x$ of radius $r$.
The set $\O_{-\d} := \{ x \in \O : \dist (x, \p \O) > \d \}$ will also be relevant, where $\d$ is chosen small enough so that $\O_{-\d}$ is not empty.

Let $w_\d: \Rn\to [0,+\infty)$ be a cut-off function, and  $\rho_\d: \Rn\to [0,+\infty)$ defined as 
\[\rho_\delta(x)=\frac{1}{\gamma (1-s)|x|^{n-1+s}}w_{\d}(x),\] 
with $0<s<1$, where the constant $\gamma(s)$ is given by
\begin{equation*}
	\gamma(s)=\frac{\pi^{\frac{n}{2}}2^s\Gamma\left(\frac{s}{2}\right)}{\Gamma\left(\frac{n-s}{2}\right)}
\end{equation*}
and $\Gamma$ is Euler's gamma function.
The precise assumptions over $w_{\d}$ are as follows:
\begin{enumerate}[label=\alph*)]
	\item $w_\d$ is radial and nonnegative.
	\item $w_\d \in C_c^\infty(B(0,\delta))$.
	\item There are constants $a_0>0$ and $0<b_0<1$ such that $0\leq w_\d \leq a_0$, and $w_\d |_{B(0, b_0\delta)} =a_0$.
	\item
	$w_\d (x_1) \geq w_\d (x_2)$ if $|x_1| \leq |x_2|$.
\end{enumerate}
Note that $\rho_\d \in L^1 (\Rn)$.

The definitions of the nonlocal gradient and divergence for smooth functions are the following.
\begin{definicion} \label{def: nonlocal gradient}
Set
	\begin{equation*}
		c_{n,s}:= \frac{n-1+s}{\gamma(1-s)}.
	\end{equation*}
	\begin{enumerate}[label=\alph*)]
		\item \label{item:Dsdu} Let $u\in C_c^{\infty} (\Rn)$. The nonlocal gradient $D_\delta^s u$ is defined as
		\begin{equation*}
			D_\delta^s u(x)= c_{n,s} \int_{B(x,\delta)} \frac{u(x)-u(y)}{|x-y|}\frac{x-y}{|x-y|}\frac{w_\d(x-y)}{|x-y|^{n-1+s}} \, dy , \qquad x \in \Rn.
		\end{equation*}
		\item
		 Let $u \in C^1_c (\Rn,\Rn)$. The nonlocal divergence is defined as
		\begin{equation*}
			\diver_{\delta}^s u(x)=  c_{n,s} \int_{B(x,\delta)} \frac{u(x)-u(y)}{|x-y|}\cdot\frac{x-y}{|x-y|}\frac{w_\d(x-y)}{|x-y|^{n-1+s}} \, dy, \qquad x \in \Rn ,
		\end{equation*}
and the nonlocal gradient is defined as
\begin{equation*}
 D_\delta^s u(x)= c_{n,s} \int_{B(x,\delta)} \frac{u(x)-u(y)}{|x-y|} \otimes \frac{x-y}{|x-y|}\frac{w_\d(x-y)}{|x-y|^{n-1+s}} \, dy , \qquad x \in \Rn.
\end{equation*}
	\end{enumerate}
\end{definicion}
Notice that the three integrals in Definition \ref{def: nonlocal gradient} are absolutely convergent because $u$ is Lipschitz and $\rho_{\d} \in L^1 (\Rn)$.
It is also immediate from the definition that $\supp D_\delta^s u \subset \supp u + B (0, \d)$, where $\supp$ denotes the support of a function.


The operators of Definition \ref{def: nonlocal gradient} are dual operators in the sense of integration by parts.
Several versions of integration by parts formulas for related fractional or nonlocal operators have appeared in the literature \cite{DGLZ,MeS,COMI2019,Silhavy2019}.
The integration by parts formula of interest in this investigation is the following \cite[Th.\ 3.2]{BeCuMo22}.

\begin{teo}
Suppose that $u \in C^{\infty}_c (\Rn)$ and $\phi \in C^1_c (\O, \Rn)$.
Then $D^s_{\d} u \in L^{\infty} (\Rn, \Rn)$ and $\diver^s_{\d} \phi \in L^{\infty} (\Rn)$.
Moreover,
	\begin{align*}
		\int_{\O} D_\delta^s u(x) \cdot \phi(x) \, dx =& - \int_{\O} u(x) \diver_\delta^s \phi (x) \, dx \\
		&- (n-1+s) \int_{\O} \int_{\O_{B, \d}} \frac{u(y)\phi(x)}{|x-y|} \cdot \frac{x-y}{|x-y|}\rho_\delta(x-y) \, dy \, dx
	\end{align*}
and these three integrals are absolutely convergent.
\end{teo}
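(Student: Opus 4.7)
The plan is to verify the $L^\infty$ bounds from Lipschitz estimates and then prove the identity by a truncation-and-symmetry argument. For the bounds: since $u$ and $\phi$ are Lipschitz (being compactly supported $C^\infty$ and $C^1$, respectively), the integrand of $D^s_\d u(x)$ is dominated up to constants by $\rho_\d(x-y)\in L^1(\Rn)$, giving $D^s_\d u\in L^\infty(\Rn,\Rn)$, and likewise $\diver^s_\d\phi\in L^\infty(\Rn)$. Compactness of $\spt u$ and $\spt\phi$ then gives absolute convergence of the two left-hand integrals. For the boundary double integral, $\spt\phi\ssubset\O$ forces $d_0:=\dist(\spt\phi,\Rn\setminus\O)>0$; since $\O_{B,\d}\subset\Rn\setminus\O$, one has $|x-y|\ge d_0$ wherever the integrand is nonzero, so it is bounded on the bounded region $\spt\phi\times\O_{B,\d}$ and hence absolutely convergent.

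To prove the identity, I would combine both left-hand integrals into a single double integral over $\O\times\Rn$. Expanding the combined integrand gives $c_{n,s}\{2u(x)\phi(x)-u(y)\phi(x)-u(x)\phi(y)\}\frac{x-y}{|x-y|^{n+1+s}}w_\d(x-y)$ (from $[u(x)-u(y)]\phi(x)+u(x)[\phi(x)-\phi(y)]=2u(x)\phi(x)-u(y)\phi(x)-u(x)\phi(y)$). The three pieces are not individually integrable near $y=x$, so I would work with truncations $D^{s,\e}_\d u,\ \diver^{s,\e}_\d\phi$ obtained by restricting the defining integrals to $\{|x-y|>\e\}$. These converge uniformly to their untruncated counterparts as $\e\to 0$ (with error $O(\e^{1-s})$), and for each fixed $\e$ the truncated kernel is bounded and compactly supported, so Fubini and splitting are unconditionally valid.

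For fixed $\e$, the $2u(x)\phi(x)$ piece vanishes because $\int_{\e<|z|<\d}z\,|z|^{-n-1-s}w_\d(z)\,dz=0$ by oddness of the integrand on a symmetric domain. The $-u(x)\phi(y)$ piece I would transform via Fubini (swap $x\leftrightarrow y$), extending $\phi$ by zero and using antisymmetry of $z\mapsto z|z|^{-n-1-s}$ together with the radiality of $w_\d$; the result is a term of the same form as the $-u(y)\phi(x)$ piece but with the opposite sign and with the inner variable confined to $y\in\O$. This cancels exactly the $y\in\O$ portion of the $-u(y)\phi(x)$ piece, so only the $y\in\O_\d\setminus\O=\O_{B,\d}$ portion survives. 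Using the identity $c_{n,s}w_\d(x-y)/|x-y|^{n-1+s}=(n-1+s)\rho_\d(x-y)$ recasts this remainder as the claimed boundary integral (still truncated by $|x-y|>\e$). Letting $\e\to 0$ finishes the proof: the left-hand side converges by uniform convergence and compactness of supports, while on the right-hand side the truncation is automatic once $\e<d_0$, so the identity passes to the limit. The main obstacle I anticipate is purely combinatorial: tracking the indicators $\mathbf{1}_\O(x),\mathbf{1}_\O(y)$ through the Fubini swap so that the asymmetric outer/inner domains conspire to give clean cancellation on $\O\times\O$ and the precise surviving boundary term on $\O\times\O_{B,\d}$.
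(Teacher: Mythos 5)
Your proposal is correct. Note that this theorem is not proved in the paper at all: it is quoted verbatim from \cite[Th.\ 3.2]{BeCuMo22}, so there is no in-paper argument to compare against; your write-up is in effect a self-contained reconstruction of the standard symmetrization proof of such nonlocal integration-by-parts formulas. The essential points are all handled properly: the $L^\infty$ bounds and absolute convergence follow from the Lipschitz bounds, $\rho_\d\in L^1(\Rn)$, and $\dist(\spt\phi,\Rn\setminus\O)=d_0>0$; the splitting of $2u(x)\phi(x)-u(y)\phi(x)-u(x)\phi(y)$ is only legitimate after the $\{|x-y|>\e\}$ truncation, which you introduce precisely for that reason, with the correct uniform $O(\e^{1-s})$ error coming from $\int_{|z|<\e}w_\d(z)|z|^{-(n-1+s)}\,dz$; the $2u(x)\phi(x)$ term dies by oddness of $z\mapsto z|z|^{-n-1-s}w_\d(z)$ on the symmetric annulus; and the Fubini swap of the $-u(x)\phi(y)$ piece (valid since the truncated kernel is bounded with compact support) produces, after using $y-x=-(x-y)$ and radiality of $w_\d$, exactly the negative of the $y\in\O$ portion of the $-u(y)\phi(x)$ piece, leaving the $y\in\O_{B,\d}$ remainder, which the identity $c_{n,s}w_\d(z)|z|^{-(n-1+s)}=(n-1+s)\rho_\d(z)$ turns into the stated boundary term; finally, for $\e<d_0$ the truncation in that term is vacuous, so the limit $\e\to0$ only needs to be taken on the left-hand side, where uniform convergence on the bounded sets $\spt\phi$ and $\O$ suffices. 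The signs work out to the stated formula, so the argument is complete.
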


We now extend Definition \ref{def: nonlocal gradient}\,\ref{item:Dsdu} to a broader class of functions.

\begin{definicion}\label{def: nonlocal gradient 2}
	\begin{enumerate}[label=\alph*)]
		\item
		Let $u \in L^1 (\O_\d)$ be such that there exists a sequence of $\{ u_j \}_{j \in \N} \subset C^{\infty}_c (\Rn)$ converging to $u$ in $L^1 (\O_\d)$ and for which $\{ D_\d^s u_j \}_{j \in \N}$ converges to some $U$ in $L^1 (\O, \Rn)$.
		We define $D_\d^s u$ as $U$.
		
		\item
		Let $\phi \in L^1 (\O_\d, \Rn)$ be such that there exists a sequence of $\{ \phi_j \}_{j \in \N} \subset C^{\infty}_c (\Rn, \Rn)$ converging to $\phi$ in $L^1 (\O_\d, \Rn)$ and for which $\{ \diver_\d^s \phi_j \}_{j \in \N}$ converges to some $\Phi$ in $L^1 (\O)$.
		We define $\diver_\d^s \phi$ as $\Phi$.
	\end{enumerate}
\end{definicion}

It was shown in \cite[Lemma 3.3]{{BeCuMo22}}
that the above definitions of $D_\d^s u$ and $\diver_\d^s \phi$ are independent of the sequence chosen.


%

Similarly to the definition of Bessel spaces $H^{s,p}(\Rn)$ \cite{BeCuMC,COMI2019}, we define the space object of our study. 

\begin{definicion}\label{de:Hspd}
Let $1 \leq p < \infty$.
We define the space $H^{s,p,\d}(\O)$ as the closure of $C^{\infty}_c(\Rn)$ under the norm 
\[
 \left\| u \right\| _{H^{s,p,\d}(\O)} = \left( \left\| u \right\|_{L^p (\O_{\d})}^p + \left\| D_\delta^s u \right\|_{L^p (\O)}^p \right)^{\frac{1}{p}} .
\]
\end{definicion}
Thus, functions in $H^{s,p,\d}(\O)$ are defined a.e.\ in $\O_{\d}$, while its gradient (Definition \ref{def: nonlocal gradient 2}) is defined a.e.\ in $\O$.
Definition \ref{de:Hspd} was introduced in \cite{BeCuMo22}.
There is an alternative definition of these spaces, based on a distributional notion of $D^s_{\d} u$; accordingly, $H^{s,p,\d}(\O)$ is also the set of functions $u \in L^p (\O_{\d})$ whose distributional $D^s_{\d} u$ is in $L^p (\O)$; see \cite{CuKrSc22}, where one can find the equivalence with the current one.
The situation is, thus, similar to what happens with the Bessel potential spaces $H^{s,p} (\Rn)$: they can be defined as the closure of smooth functions \cite{ShS2015} and as the set of functions $u \in L^p (\Rn)$ whose distributional gradient $D^s u$ is in $L^p (\Rn)$ \cite{COMI2019}.

The case $p=\infty$ is avoided in Definition \ref{de:Hspd}.
Nevertheless, one can give the following definition based on a type of closure: $u \in H^{s,\infty,\d}(\O)$ when there exists a sequence $\{ u_j \}_{j \in \N}$ in $C^{\infty}_c (\Rn)$ such that $u_j \to u$ uniformly in $\O_{\d}$, $D^s_{\d} u_j \to D^s_{\d} u$ a.e.\ in $\O$ and $\| D^s_{\d} u_j \|_{L^{\infty} (\O)} \to \| D^s_{\d} u \|_{L^{\infty} (\O)}$.
This definition is inspired by the density properties of Sobolev functions in the case $p=\infty$; see, e.g., \cite[Exercise 10.21]{Leoni09}.
It is not difficult to show that all theorems stated here for $p< \infty$ can be proved with slight adaptations to the case $p=\infty$.


The space $H^{s,p,\d}(\O)$ satisfies reflexivity and separability properties.  
See \cite[Prop 3.4]{BeCuMo22}.
\begin{prop} \label{prop: espacio separable y reflexivo}
Let $1\leq p < \infty$.
Then $H^{s,p,\d}(\Omega)$ is a separable Banach space.
If, in addition, $p > 1$, it is reflexive.
\end{prop}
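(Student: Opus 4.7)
The plan is to identify $H^{s,p,\delta}(\O)$ with a closed subspace of the product Banach space
\[
 X := L^p(\O_\delta) \times L^p(\O,\Rn), \qquad \|(v,V)\|_X := \bigl(\|v\|_{L^p(\O_\delta)}^p + \|V\|_{L^p(\O)}^p\bigr)^{1/p},
\]
via the linear map $T: u \mapsto (u|_{\O_\delta}, D^s_\delta u)$. By the very definition of the norm on $H^{s,p,\delta}(\O)$, $T$ is a linear isometry onto its image. Now $X$ is a separable Banach space (as a finite product of separable $L^p$ spaces over $\sigma$-finite measure spaces) and is reflexive whenever $p>1$. Both properties descend to closed subspaces, so it suffices to prove that $T(H^{s,p,\delta}(\O))$ is closed in $X$.

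For the closedness, take $\{u_j\} \subset H^{s,p,\delta}(\O)$ with $T(u_j) \to (u, U)$ in $X$, i.e.\ $u_j \to u$ in $L^p(\O_\delta)$ and $D^s_\delta u_j \to U$ in $L^p(\O,\Rn)$. For each $j$, using that $u_j$ lies in the closure of $C^\infty_c(\Rn)$ under the $H^{s,p,\delta}$-norm, I pick $\varphi_j \in C^\infty_c(\Rn)$ with $\|u_j - \varphi_j\|_{H^{s,p,\delta}(\O)} < 1/j$. A diagonal argument yields $\varphi_j \to u$ in $L^p(\O_\delta)$ and $D^s_\delta \varphi_j \to U$ in $L^p(\O,\Rn)$. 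Since $\O_\delta$ is bounded, these $L^p$ convergences imply convergence in $L^1(\O_\delta)$ and $L^1(\O,\Rn)$ respectively, so Definition \ref{def: nonlocal gradient 2} applies and tells us that $D^s_\delta u$ exists in the extended sense with $D^s_\delta u = U$. Moreover the very construction of the $\varphi_j$ exhibits $u$ as a limit in $H^{s,p,\delta}(\O)$-norm of functions from $C^\infty_c(\Rn)$, so $u \in H^{s,p,\delta}(\O)$ and $T(u) = (u, U)$.

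The delicate point, and the only one where real content is needed, is the well-posedness of the limit object $D^s_\delta u$: one must know that different approximating sequences produce the same $U$, which is exactly the independence-of-sequence statement behind Definition \ref{def: nonlocal gradient 2}, already established in \cite{BeCuMo22}. Modulo that ingredient, the remaining arguments are purely soft: $T(H^{s,p,\delta}(\O))$ is an isometric, closed copy of $H^{s,p,\delta}(\O)$ inside $X$, and separability (resp.\ reflexivity for $p>1$) is inherited. This gives Proposition \ref{prop: espacio separable y reflexivo}.
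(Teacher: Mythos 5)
Your argument is correct and is essentially the standard one: the paper itself offers no in-text proof but defers to \cite[Prop.\ 3.4]{BeCuMo22}, where the same identification of $H^{s,p,\d}(\O)$ with a closed subspace of $L^p(\O_\d)\times L^p(\O,\Rn)$ via $u\mapsto(u,D^s_\d u)$ is used, with the independence-of-sequence lemma guaranteeing that the map is well defined. The only cosmetic remark is that your diagonal argument can be shortened: the image of $T$ is by construction the closure in $X$ of $\{(\f,D^s_\d\f):\f\in C^{\infty}_c(\Rn)\}$, hence closed automatically.
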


The following continuous inclusion holds immediately.
\begin{prop}\label{pr:inclusionH}
The continuous inclusion $H^{s,p,\d}(\O) \subset H^{s,q,\d}(\O)$ holds whenever $1\leq q \leq p < \infty$.
\end{prop}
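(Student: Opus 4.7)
The plan is to reduce the statement to the elementary continuous inclusion of Lebesgue spaces on bounded sets and then propagate it to $H^{s,p,\d}(\O)$ through the density of $C^\infty_c(\Rn)$ that is built into the definition.

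First I would record the basic observation that $\O$ and $\O_\d = \O + B(0,\d)$ are bounded (since $\O$ is bounded by hypothesis). Hence H\"older's inequality gives
\[
\|v\|_{L^q(\O_\d)} \leq |\O_\d|^{\frac{1}{q}-\frac{1}{p}}\,\|v\|_{L^p(\O_\d)},
\qquad
\|V\|_{L^q(\O)} \leq |\O|^{\frac{1}{q}-\frac{1}{p}}\,\|V\|_{L^p(\O)},
\]
for every measurable $v$ and $V$. Applying this to $v=u$ on $\O_\d$ and $V=D^s_\d u$ on $\O$ for any $u\in C^\infty_c(\Rn)$, and combining the two inequalities, yields a constant $C=C(|\O|,|\O_\d|,p,q)$ such that
\[
\|u\|_{H^{s,q,\d}(\O)} \leq C\,\|u\|_{H^{s,p,\d}(\O)}
\qquad \text{for all } u \in C^\infty_c(\Rn).
\]

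Next, I would lift this inequality from smooth functions to arbitrary $u \in H^{s,p,\d}(\O)$ using Definition \ref{de:Hspd}. Given such $u$, pick a sequence $\{u_j\}\subset C^\infty_c(\Rn)$ with $u_j \to u$ in the $H^{s,p,\d}(\O)$-norm. By the previous step, $\{u_j\}$ is Cauchy (in fact convergent) in the $H^{s,p,\d}(\O)$-norm, so it is also Cauchy in the $H^{s,q,\d}(\O)$-norm, hence converges to some $\tilde u \in H^{s,q,\d}(\O)$. The $L^p$ convergence of $u_j$ to $u$ on $\O_\d$ together with $L^p\subset L^q$ on the bounded set $\O_\d$ forces $\tilde u = u$ a.e.\ in $\O_\d$, so $u\in H^{s,q,\d}(\O)$. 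Passing to the limit $j\to\infty$ in the inequality for $u_j$ gives $\|u\|_{H^{s,q,\d}(\O)}\leq C\,\|u\|_{H^{s,p,\d}(\O)}$, which is the claimed continuity of the inclusion.

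One minor point I would verify explicitly (to avoid any ambiguity) is that the nonlocal gradients computed in the two spaces coincide. This is immediate from Definition \ref{def: nonlocal gradient 2}: the same sequence $\{u_j\}$ witnesses that $D^s_\d u_j$ converges to the same limit in $L^q(\O,\Rn)$ as in $L^p(\O,\Rn)$, because $L^p(\O)\hookrightarrow L^q(\O)$ on the bounded set $\O$. There is no real obstacle here; the whole argument is a two-line H\"older estimate followed by an approximation argument that relies on the fact that the underlying domains $\O$ and $\O_\d$ have finite Lebesgue measure.
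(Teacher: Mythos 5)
Your proof is correct and is exactly the argument the paper has in mind: the paper states the inclusion as immediate without proof, and the intended reasoning is precisely your H\"older estimate $L^p\subset L^q$ on the bounded sets $\O$ and $\O_\d$ combined with the density of $C^\infty_c(\Rn)$ built into Definition \ref{de:Hspd}. Your extra check that the nonlocal gradients coincide (via the sequence-independence in Definition \ref{def: nonlocal gradient 2}) is a sensible, correct precaution.
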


In order to describe the boundary condition, we recall the set $\O_{-\d} = \{ x \in \O : \dist (x, \p \O) > \d \}$ and define the subspace $H_0^{s,p,\d}(\O_{-\d})$ as the closure of $C_c^{\infty}(\O_{-\d})$ in $H^{s,p,\d}(\O)$.
It is immediate to check that any $u \in H_0^{s,p,\d}(\O_{-\d})$ satisfies $u=0$ a.e.\ in $\O_{\d} \setminus \O_{-\d}$.
Finally, given $g \in H^{s,p,\d}(\O)$ we define the affine subspace $H^{s,p,\d}_g (\O_{-\d})$ as $g+ H^{s,p,\d}_0(\O_{-\d})$.


An essential tool for obtaining existence of minimizers for integral functionals is a Poincar\'e-type inequality. 
Given $p > 1$ and $0<s<1$ with $sp<n$ we define $p_s^* := \frac{np}{n-sp}$.

\begin{teo}\label{th:PoincareSobolev delta}
Let $1<p<\infty$.
Then there exists $C=C(|\O|,n,p,s)>0$ such that
\begin{displaymath}
\lVert u \rVert_{L^q (\O)} \leq C \lVert D_\d^s u \rVert_{L^p(\O)}
\end{displaymath}
for all $u \in H_0^{s,p,\d}(\O_{-\d})$, and any $q$ satisfying
\[
\begin{cases}
 q\in \left[1, p_s^* \right] & \text{if } sp<n , \\
 q\in[1, \infty) & \text{if } sp=n , \\
 q\in[1,\infty] & \text{if } sp>n .
\end{cases}
\]
\end{teo}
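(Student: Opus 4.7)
Since $H_0^{s,p,\d}(\O_{-\d})$ is by definition the closure of $C_c^\infty(\O_{-\d})$ in $H^{s,p,\d}(\O)$, a standard density argument reduces the problem to proving the inequality for $u \in C_c^\infty(\O_{-\d})$. For such $u$, one has $\supp D^s_\d u \subset \O_{-\d} + B(0,\d) \subset \O$, so $\|D^s_\d u\|_{L^p(\O)} = \|D^s_\d u\|_{L^p(\R^n)}$, which means the right-hand side already controls the full $L^p$ norm of the nonlocal gradient on $\R^n$.

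The core of the argument is to establish a pointwise Riesz-potential estimate of the form
\[
 |u(x)| \leq C\, (I_s * f)(x) \qquad \text{for a.e.\ } x \in \R^n,
\]
where $I_s(y) = \gamma(s)^{-1}|y|^{-(n-s)}$ is the Riesz kernel of order $s$ and $f \geq 0$ satisfies $\|f\|_{L^p(\R^n)} \leq C \|D^s_\d u\|_{L^p(\O)}$. A clean way to produce such a bound is through the identity $D^s_\d u = D(Q^s_\d * u)$ (to be established in Section~\ref{se:from}): setting $v := Q^s_\d * u$, this yields $\nabla v = D^s_\d u$, and the Riesz-type behaviour of $Q^s_\d$ near the origin permits an (approximate) inversion of the convolution, realising $u$ as the Riesz potential of order $s$ of $D^s_\d u$ modulo bounded Riesz-transform-type operators on $L^p$. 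Once the pointwise bound is in hand, the Hardy--Littlewood--Sobolev inequality yields $\|u\|_{L^{p_s^*}(\R^n)} \leq C \|D^s_\d u\|_{L^p(\O)}$ in the subcritical regime $sp<n$. The critical case $sp=n$ and the supercritical case $sp>n$ follow respectively from the John--Nirenberg/exponential-integrability and Morrey-type embeddings available for Riesz potentials, while interpolation with the trivial inclusion $L^{p_s^*}(\O) \subset L^1(\O)$, valid because $\O$ is bounded, supplies the full range of permitted exponents $q$.

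The principal obstacle is making the inversion of $Q^s_\d$ rigorous: because $Q^s_\d$ is compactly supported and integrable, it differs from the pure Riesz potential $I_{1-s}$ by a localisation error, so the inversion is not a direct Fourier-multiplier computation; one must control the contribution of the cutoff $w_\d$ carefully enough to preserve the Hardy--Littlewood--Sobolev scaling that produces the sharp exponent $p_s^*$. An equivalent route bypasses the inversion altogether by deriving the pointwise Riesz estimate directly from the integral defining $D^s_\d u$, via a one-dimensional fundamental-theorem-of-calculus representation along rays combined with averaging against the kernel $\rho_\d$; this avoids Fourier analysis but requires delicate integral manipulations to recover the same sharp exponent.
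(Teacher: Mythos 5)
The paper's own proof of this theorem is purely citational: it defers the three exponent regimes to Theorems 6.1, 6.3 and 6.4 of \cite{BeCuMo22}, so there is no argument in the paper itself to compare against line by line. Your strategic template (reduce by density to $u\in C_c^\infty(\Omega_{-\delta})$, note that $\supp D^s_\delta u\subset\Omega$ so the right-hand side already controls the full $L^p(\Rn)$ norm, establish a pointwise Riesz-potential bound, then apply Hardy--Littlewood--Sobolev, Trudinger or Morrey in the respective regimes, and interpolate in $q$ on the bounded set $\Omega$) is the right one and matches the structure of the proofs the cited theorems rest on.

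The gap is that you never actually establish the one estimate on which everything hinges, namely $|u(x)|\le C\,(I_s*f)(x)$ with $\|f\|_{L^p}\le C\|D^s_\delta u\|_{L^p(\Omega)}$, and you say as much: you label the inversion of $Q^s_\delta$ ``the principal obstacle'' and offer two candidate routes without executing either. What is actually required is a nonlocal fundamental theorem of calculus: a kernel $V^s_\delta$ with $u=V^s_\delta * D^s_\delta u$ for $u\in C_c^\infty(\Omega_{-\delta})$, together with the asymptotics $|V^s_\delta(x)|\le C|x|^{-(n-s)}$ near the origin and suitable integrable decay at infinity. That decay at infinity is the genuinely delicate part --- $Q^s_\delta$ has compact support but its convolutional ``inverse'' $V^s_\delta$ does not --- and it is precisely this control that produces the sharp exponent $p_s^*$ and a constant depending only on $(|\Omega|,n,p,s)$. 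Asserting that the inversion holds ``modulo bounded Riesz-transform-type operators'' does not produce the dominating function $f$; in fact the correction is not merely a Calder\'on--Zygmund operator applied to $D^s_\delta u$, since one must also show the error kernel retains the Riesz-type integrability. As written the proposal correctly identifies the plan and the obstacle, but the obstacle \emph{is} the proof: you would need to construct $V^s_\delta$ and prove the two kernel estimates, or else explicitly invoke the corresponding potential representation from \cite{BeCuMo22}.
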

\begin{proof}
The case $sp<n$ corresponds to \cite[Th.\ 6.1]{BeCuMo22}.
The case $sp=n$ is a consequence of \cite[Th.\ 6.4]{BeCuMo22} (or else of the previous case and Proposition \ref{pr:inclusionH}).
The case $sp>n$ is a particular case of \cite[Th.\ 6.3]{BeCuMo22}.
\end{proof}

The following result 
decides which of the embeddings of Theorem \ref{th:PoincareSobolev delta} are compact.
We will indicate by $\weakc$ weak convergence.

\begin{teo}\label{Hspdelta embedding theorem}
Let $1<p<\infty$ and $g \in H^{s,p,\d}(\O)$. Then for any sequence $\{u_j \}_{j \in \N} \subset  H_g^{s,p,\d}(\Omega_{-\d})$
such that
\begin{equation*}
u_j \rightharpoonup u \quad \text{in } H^{s,p,\d} (\O),
\end{equation*}
for some $u \in H^{s,p,\d} (\O)$, one has $u \in H^{s,p,\d}_g (\O)$ and
\begin{equation*}
u_j \rightarrow u \quad \text{in } L^q (\O),
\end{equation*}
for every $q$ satisfying
\[
\begin{cases}
 q\in \left[1, p_s^* \right) & \text{if } sp<n , \\
 q\in[1, \infty) & \text{if } sp = n , \\
 q\in[1, \infty] & \text{if } sp > n .
\end{cases}
\] 
\end{teo}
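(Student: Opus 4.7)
My plan is in two stages: an algebraic identification of where the weak limit lies, then an analytic upgrade from weak to strong $L^q$-convergence by reducing to the classical Sobolev setting via the fact (Section \ref{se:from}) that every nonlocal gradient is a classical one.

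For the first stage, $H^{s,p,\d}_0(\O_{-\d})$ is by definition the norm closure of $C^\infty_c(\O_{-\d})$ in the Banach space $H^{s,p,\d}(\O)$, hence a closed linear (and convex) subspace, therefore weakly closed by Mazur's lemma. Setting $v_j := u_j - g$ yields a sequence in $H^{s,p,\d}_0(\O_{-\d})$ with $v_j \weakc u - g$ in $H^{s,p,\d}(\O)$; hence $u - g \in H^{s,p,\d}_0(\O_{-\d})$ and so $u \in H^{s,p,\d}_g(\O)$.

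For the second stage, I invoke the representation $D^s_\d w = D(Q^s_\d \ast w)$ with $Q^s_\d \in L^1(\Rn)$ of compact support in $\overline{B(0,\d)}$. Since $v_j$ vanishes a.e.\ in $\O_\d \setminus \O_{-\d}$, after extending by zero $v_j$ has support in $\overline{\O_{-\d}}$, and consequently $Q^s_\d \ast v_j$ has support in $\overline{\O_{-\d}} + \overline{B(0,\d)} \subset \overline{\O}$. The classical gradient of $Q^s_\d \ast v_j$ equals $D^s_\d v_j$, uniformly bounded in $L^p(\O)$ by hypothesis, so $\{Q^s_\d \ast v_j\}$ is bounded in $W^{1,p}(\Rn)$ with supports in a common compact set. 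The classical Rellich--Kondrachov theorem then gives $Q^s_\d \ast v_j \to Q^s_\d \ast v$ strongly in $L^r(\Rn)$ for $r$ in the appropriate subcritical range.

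The main obstacle is to transfer this strong convergence back to $v_j$ itself. My preferred route is to invoke an inverse kernel $P^s_\d$ (the analogue of a Bessel potential adapted to this nonlocal-with-horizon framework, available from the structural properties of $Q^s_\d$ developed in \cite{BeCuMo22}) such that $v_j = P^s_\d \ast (Q^s_\d \ast v_j)$ in the appropriate sense, so that strong convergence of $Q^s_\d \ast v_j$ forces $v_j \to v$ strongly in $L^q$ in the claimed ranges, the exponent correspondence being the usual one between $W^{1,p}$ and $H^{s,p}$ embeddings. As a fallback, one can interpolate: Theorem \ref{th:PoincareSobolev delta} provides a uniform $L^{p_s^*}(\O)$-bound on $\{v_j\}$; combining this with strong $L^1(\O)$-convergence of $v_j$ (extracted from $Q^s_\d \ast v_j \to Q^s_\d \ast v$ in $L^1$ via the injectivity of convolution with the compactly supported $Q^s_\d$ together with the weak convergence $v_j \weakc v$ in $L^{p_s^*}$) and the standard interpolation inequality yields strong $L^q$-convergence for every $q < p_s^*$. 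The strict-inequality threshold $q < p_s^*$ in the subcritical case reflects the classical Rellich--Kondrachov loss of a critical exponent.
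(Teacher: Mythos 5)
Your first stage is fine: $H^{s,p,\d}_0(\O_{-\d})$ is a closed convex subspace of $H^{s,p,\d}(\O)$, hence weakly closed, and translating by $g$ gives $u\in H^{s,p,\d}_g(\O_{-\d})$. The gap is in the second stage, and it is exactly at the point you flag as "the main obstacle": transferring strong convergence from $Q^s_\d * v_j$ back to $v_j$. Neither of your two routes works as stated. For the first route, the inverse of the operator $v\mapsto Q^s_\d * v$ is not convolution with an integrable kernel $P^s_\d$: on the Fourier side $\widehat{Q^s_\d}$ decays (this is what makes $D(Q^s_\d * v)$ an operator of order $1-s$ on $v$), so the inverse is an operator of positive order $s$, unbounded on $L^r$, and its mapping properties are precisely the nontrivial content one would need to prove; nothing of this sort is "available from the structural properties of $Q^s_\d$ developed in \cite{BeCuMo22}". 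For the fallback route, injectivity of convolution with $Q^s_\d$ only identifies the limit; it cannot upgrade the weak convergence $v_j\weakc v$ to strong $L^1(\O)$ convergence. Indeed, mollification by a fixed $L^1$ kernel damps oscillations, so in general a bounded, merely weakly convergent sequence has strongly convergent mollifications; ruling this out for sequences with $D^s_\d v_j$ bounded in $L^p$ is the whole point of the theorem, and your argument never uses that bound on $v_j$ itself, only on $Q^s_\d * v_j$. (Also, even if the interpolation fallback worked, it would only give $q<p_s^*$ in the subcritical case and would not reach $q=\infty$ when $sp>n$.)

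For comparison: the paper does not prove this statement; it is quoted from \cite{BeCuMo22} (Th.\ 7.3 there, with the $p=1$ variant mentioned separately), where the compactness is obtained with the representation in the opposite direction to yours, namely a nonlocal fundamental theorem of calculus reconstructing $v_j$ from $D^s_\d v_j$ by convolution with a fixed locally integrable kernel. That representation yields equicontinuity of translations of $\{v_j\}$ in $L^p$ (Fr\'echet--Kolmogorov), hence strong $L^p$ compactness, and then the Poincar\'e--Sobolev bounds of Theorem \ref{th:PoincareSobolev delta} plus interpolation (and a Morrey-type estimate when $sp>n$) give the stated range of $q$. Alternatively, the Fourier-multiplier inversion of $Q^s_\d$ developed in \cite{CuKrSc22} can replace that step. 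Your write-up would become a proof if you invoked one of these tools explicitly in place of the hypothetical kernel $P^s_\d$.
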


The case $p=1$ is not covered by Theorem \ref{th:PoincareSobolev delta}, since the result \cite[Th.\ 6.1]{BeCuMo22} was not able to deal with that case.
On the contrary, there is a version of Theorem \ref{Hspdelta embedding theorem} for $p=1$ in \cite[Th.\ 7.3]{BeCuMo22}, but, for simplicity, we have not mentioned it since it will not be used, as the existence theory of Section \ref{se:existence} deals with the reflexive case.

\section{Calculus in $H^{s,p,\d}$}\label{se:preliminaries}

In this section we present a product formula for the nonlocal derivative and divergence.

We start with the embeddings from Sobolev spaces $W^{1,p}$ to $H^{s,p,\d}$, as well as an inequality between the norms of the classical and the nonlocal gradient.
Although this result is not actually needed in the sequel, we have included it in order to locate the regularity of $H^{s,p,\d}$ in comparison with that of $W^{1,p}$. At the end of this article we will comment on functions in the latter functional spaces that are not in the corresponding Sobolev ones.

\begin{lem} \label{Lemma nonlocal gradient bound} 
	Assume that $\O_{\d}$ has a Lipschitz boundary. Then:
\begin{enumerate}[label=\alph*)]
\item\label{item:WHa} Let $1\leq p < \infty$.
The continuous embedding $W^{1,p}(\O_\d) \subset H^{s,p,\d} (\O)$ holds.
Moreover, for all $u \in W^{1,p}(\O_\d)$,
	\begin{equation}\label{eq:Sobolevbound}
 	\|D_\d^s u\|_{L^p(\O)} \leq (n-1+s) \left\| \rho_{\d} \right\|_{L^1 (\Rn)} \left\| Du \right\|_{L^p(\O_{\d})} .
	\end{equation}	

\item\label{item:WHb} 
The continuous embedding $W^{1,\infty}(\O_\d) \subset H^{s,p,\d} (\O)$ holds for all $p \in [1, \infty)$.
Moreover, for all $u \in W^{1,\infty}(\O_\d)$,
	\begin{equation}\label{eq:Sobolevboundinf}
 	\|D_\d^s u\|_{L^{\infty}(\O)} \leq (n-1+s) \left\| \rho_{\d} \right\|_{L^1 (\Rn)} \| D u\|_{L^{\infty}(\O_\d)} .
	\end{equation}	
\end{enumerate}
\end{lem}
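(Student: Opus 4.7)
The plan is to start with a pointwise estimate for smooth functions and then extend by density. For $u \in C_c^\infty(\Rn)$, the fundamental theorem of calculus gives
\[
u(x) - u(y) = \int_0^1 Du(y + t(x-y))(x-y) \, dt,
\]
so $|u(x)-u(y)|/|x-y| \leq \int_0^1 |Du(y + t(x-y))| \, dt$. Combined with the identity $c_{n,s}\, w_\d(x-y)/|x-y|^{n-1+s} = (n-1+s)\,\rho_\d(x-y)$, the definition of $D_\d^s u$ produces, after the change of variables $z = x-y$,
\[
|D_\d^s u(x)| \leq (n-1+s) \int_{B(0,\d)} \int_0^1 |Du(x-(1-t)z)| \, dt \, \rho_\d(z) \, dz.
\]
Minkowski's integral inequality in $L^p(\O)$ then yields
\[
\|D_\d^s u\|_{L^p(\O)} \leq (n-1+s) \int_{B(0,\d)} \int_0^1 \|Du(\cdot - (1-t)z)\|_{L^p(\O)} \, dt \, \rho_\d(z) \, dz,
\]
and since $x - (1-t)z \in \O_\d$ whenever $x \in \O$, $|z| < \d$ and $t \in [0,1]$, each inner norm is bounded by $\|Du\|_{L^p(\O_\d)}$. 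This gives \eqref{eq:Sobolevbound} for smooth $u$.

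To pass to $u \in W^{1,p}(\O_\d)$, I would use the Lipschitz boundary hypothesis to invoke a continuous extension operator $E: W^{1,p}(\O_\d) \to W^{1,p}(\Rn)$ and approximate $Eu$ by a sequence $\{u_j\} \subset C_c^\infty(\Rn)$ converging in $W^{1,p}(\Rn)$ (mollification plus cutoff). Applying the smooth estimate to the difference $u_j - u_k$ shows that $\{D_\d^s u_j\}$ is Cauchy in $L^p(\O)$; combined with $u_j \to u$ in $L^p(\O_\d)$, Definition \ref{def: nonlocal gradient 2} yields $u \in H^{s,p,\d}(\O)$ with $D_\d^s u = \lim_j D_\d^s u_j$ in $L^p(\O)$. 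Passing to the limit in the estimate for $u_j$ gives \eqref{eq:Sobolevbound} for $u$, and the continuity of the inclusion is immediate.

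For part \ref{item:WHb}, the pointwise estimate above directly delivers \eqref{eq:Sobolevboundinf} for smooth $u$. For $u \in W^{1,\infty}(\O_\d)$, boundedness of $\O_\d$ gives $u \in W^{1,p}(\O_\d)$ for every $p \in [1,\infty)$, so part \ref{item:WHa} produces $u \in H^{s,p,\d}(\O)$. To obtain \eqref{eq:Sobolevboundinf} for the resulting $D_\d^s u$, I would approximate $u$ by mollifications of a norm-preserving Lipschitz extension (for instance a McShane-type extension) so that $\|Du_j\|_{L^\infty(\Rn)} \leq \|Du\|_{L^\infty(\O_\d)}$, pass to a subsequence with $D_\d^s u_j \to D_\d^s u$ a.e.\ in $\O$ via the $L^p$ convergence from the previous paragraph, and take limits in the pointwise estimate. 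The main technical difficulty is precisely this last step in the $p=\infty$ case: one must ensure that the approximation preserves (up to a constant sent to $1$) the $L^\infty$ norm of the classical gradient, which is where the Lipschitz regularity of $\p \O_\d$ and the careful choice of extension enter critically. Everything else reduces to Minkowski's inequality and the closure definition of $H^{s,p,\d}(\O)$.
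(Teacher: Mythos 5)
Part \ref{item:WHa} is essentially the paper's argument. Your fundamental-theorem-of-calculus bound on the difference quotient is a spelled-out version of the $L^p$ translation estimate the paper cites from Brezis, and both proofs then apply Minkowski's integral inequality and exploit that $x-(1-t)z\in\O_\d$ for $x\in\O$, $|z|<\d$. The only difference is in the density step: you establish that $\{D_\d^s u_j\}$ is Cauchy in $L^p(\O)$ directly from the smooth estimate applied to $u_j-u_k$, whereas the paper invokes the chain of embeddings $W^{1,p}(\Rn)\subset H^{s,p}(\Rn)\subset H^{s,p,\d}(\O)$ to conclude that the approximating sequence converges in $H^{s,p,\d}(\O)$. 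Both are valid; yours is slightly more self-contained, the paper's reuses machinery already developed in \cite{BeCuMo22}.

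For part \ref{item:WHb}, however, your proposed route has a genuine gap that you flag but do not close. A McShane extension of $u|_{\O_\d}$ preserves the Lipschitz constant of $u$ on $\O_\d$ \emph{with respect to the Euclidean metric}, namely $\sup_{x\ne y\in\O_\d}|u(x)-u(y)|/|x-y|$, and this quantity can strictly exceed $\|Du\|_{L^\infty(\O_\d)}$ whenever $\O_\d$ is not convex: for a Lipschitz domain the geodesic and Euclidean distances are only comparable, not equal. Consequently, mollifications of that extension satisfy only $\|Du_j\|_{L^\infty(\Rn)}\leq C\,\|Du\|_{L^\infty(\O_\d)}$ with some $C>1$, and you would lose the exact constant asserted in \eqref{eq:Sobolevboundinf}. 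The paper sidesteps this entirely with a cheaper device: it applies part \ref{item:WHa}, bounds $\|Du\|_{L^p(\O_\d)}$ by $|\O_\d|^{1/p}\|Du\|_{L^\infty(\O_\d)}$, and lets $p\to\infty$; since $\O$ is bounded, $\|D_\d^s u\|_{L^p(\O)}\to\|D_\d^s u\|_{L^\infty(\O)}$ while $|\O_\d|^{1/p}\to 1$, so the sharp constant drops out with no gradient-norm-preserving extension needed. (An alternative that stays closer to your spirit: since segments $[x,y]$ with $x\in\O$, $y\in B(x,\d)$ already lie in $\O_\d$, the pointwise FTC bound applies to Lipschitz $u$ on $\O_\d$ directly without any extension; but one must then separately check that the closure-defined $D_\d^s u$ agrees a.e.\ with the integral formula for such $u$, which is extra work the $p\to\infty$ trick avoids.)
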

\begin{proof}
We start with \ref{item:WHa}.
We first assume $u \in C_c^{\infty}(\Rn)$.
Applying Minkowski's integral inequality (see, e.g., \cite[App.\ A.1]{Stein70}) to the $L^p$ norm of $D^s_{\d} u$, we have
	\begin{equation} \label{eq: sobolev embedding 1}
\begin{split}
		&\left(\int_{\O} \left|\int_{B(x,\d)} \frac{u(x)-u(y)}{|x-y|} \frac{x-y}{|x-y|} \frac{w_\d(x-y)}{|x-y|^{n+s-1}} \, dy\right|^p dx\right)^{\frac{1}{p}} \leq \\
	&\int_{B(0,\d)} \left( \int_{\O}\left(\frac{|u(x)-u(x-h)|}{|h|^{n+s}}w_\d(h)\right)^p dx \right)^{\frac{1}{p}} dh .
\end{split}
	\end{equation}
	Now, for all $h\in B(0,\d) \setminus \{0 \}$,
	\begin{equation} \label{eq: sobolev embedding 2}
	\begin{split}
		\left(\int_{\O}\left(\frac{|u(x)-u(x-h)|}{|h|^{n+s}}w_\d(h)\right)^p dx\right)^{\frac{1}{p}} =& \g (1-s) \rho_{\d} (h) \left(\int_{\O} \left| \frac{u(x)- u(x-h)}{|h|} \right|^p dx\right)^{\frac{1}{p}} \\
		&\leq \g (1-s) \rho_{\d} (h) \| D u \|_{L^p(\O_\d)} ,
	\end{split}
	\end{equation}
	where we have used a classic inequality on the $L^p$ estimate of translations \cite[Prop.\ 9.3]{Brezis}.
Combining \eqref{eq: sobolev embedding 1} and \eqref{eq: sobolev embedding 2} we obtain inequality \eqref{eq:Sobolevbound}.

Now we show the inequality for any $u \in W^{1,p}(\O_\d)$ through an extension and density argument.
Let $\tilde{u} \in W^{1,p}(\Rn)$ be an extension of $u$, and let $\{ u_j \}_{j \in \N}$ be a sequence in $C^{\infty}_c (\Rn)$ converging to $\tilde{u}$ in $W^{1,p}(\Rn)$.
Then $\{ u_j \}_{j \in \N}$ also converges to $u$ in $H^{s,p,\d}(\O)$; indeed, this can be shown by the string of embeddings
\[
 W^{1,p}(\Rn) \subset H^{s,p} (\Rn) \subset H^{s,p, \d} (\O) ;
\]
see \cite[Ch.\ 1]{Adams} or \cite[Prop.\ 2.7]{bellido2020convergence} for the first inclusion, and \cite[Prop.\ 3.5]{BeCuMo22} for the second.
By inequality \eqref{eq:Sobolevbound},
\[
 \| D_\d^s u_j \|_{L^p(\O)} \leq(n-1+s) \left\| \rho_{\d} \right\|_{L^1 (\Rn)} \| D u_j \|_{L^p(\O_\d)} , 
\]
and passing to the limit as $j \to \infty$, we obtain \ref{item:WHa}.

For the proof of \ref{item:WHb} we rely on \ref{item:WHa}.
Let $u \in W^{1,\infty}(\O_\d)$.
Then $u \in W^{1,p}(\O_\d)$ for all $p \in[1, \infty)$, so $u \in H^{s,p,\d} (\O)$ and
\[
 \|D_\d^s u\|_{L^p(\O)} \leq(n-1+s) \left\| \rho_{\d} \right\|_{L^1 (\Rn)} \left\| Du \right\|_{L^p(\O_{\d})} \leq (n-1+s) \left\| \rho_{\d} \right\|_{L^1 (\Rn)} |\O|^{\frac{1}{p}} \| D u\|_{L^{\infty}(\O_\d)}.
\]
Letting $p \to \infty$ we obtain inequality \eqref{eq:Sobolevboundinf}, so \ref{item:WHb} is proved.
\end{proof}

Note that one cannot replace $\| D u\|_{L^p(\O_\d)}$ with $\| D u\|_{L^p(\O)}$ in inequality \eqref{eq:Sobolevbound}, since, if $\| D u\|_{L^p(\O)} = 0$ then $u$ is constant in $\O$, hence $D_\d^s u = 0$ in $\O_{-\d}$, but $D_\d^s u$ is not necessarily zero in $\O \setminus \O_{-\d}$.

Now we introduce a nonlocal operator similar to the nonlocal gradient and divergence that plays an essential part in the derivative of a product. The fractional analogue of this operator was studied in \cite{BeCuMC}.
In truth, the same symbol $K^{s,\d}_{\f}$ denotes four slightly different operators, but the notation chosen avoids any risk of confusion.
Henceforth, $[\varphi]_{C^{0,1}(\O_\d)}$ denotes the Lipschitz seminorm of $\varphi$ in $\O_\d$.

\begin{definicion}\label{de:Ksd}
Let $\f \in  C^{0,1}(\O_\d)$.
\begin{enumerate}[label=\alph*)]
\item For $U \in L^1 (\O_{\d})$ or $U \in L^1 (\O_{\d}, \Rnn)$ we define
\[
 K^{s,\d}_{\varphi}(U)(x)= c_{n,s} \int_{B(x,\d)} \frac{\varphi(x)-\varphi(y)}{|x-y|^{n+s}} U(y) \frac{x-y}{|x-y|}w_\d(x-y) \, dy , \qquad \text{a.e. } x \in \O .
\]

\item For $U \in L^1 (\O_{\d}, \Rn)$ we define
\[
 K^{s,\d}_{\varphi}(U)(x)= c_{n,s} \int_{B(x,\d)} \frac{\varphi(x)-\varphi(y)}{|x-y|^{n+s}} U(y) \cdot \frac{x-y}{|x-y|}w_\d(x-y) \, dy , \qquad \text{a.e. } x \in \O
\]
and
\[
 K^{s,\d}_{\varphi}(U^T)(x)= c_{n,s} \int_{B(x,\d)} \frac{\varphi(x)-\varphi(y)}{|x-y|^{n+s}} U(y) \otimes \frac{x-y}{|x-y|}w_\d(x-y) \, dy , \qquad \text{a.e. } x \in \O .
\]
\end{enumerate}
\end{definicion}

The next result is the analogue of \cite[Lemma 3.2]{BeCuMC} in a bounded domain framework.

\begin{lem} \label{Lema operador lineal delta} 
Let $1 \leq p \leq \infty$.
Let $\varphi \in  C^{0,1}(\O_\d)$.
Then:
\begin{enumerate}[label=\alph*)]
\item\label{item:Kbounded} The operators
\begin{align*}
 K^{s,\d}_{\f}: L^p(\O_\d) & \to L^p(\O, \Rn) , & K^{s,\d}_{\f}: L^p(\O_\d, \Rnn) & \to L^p(\O, \Rn)
\end{align*}
and
\begin{align*}
 K^{s,\d}_{\f}: L^p(\O_\d, \Rn) & \to L^p(\O), & K^{s,\d}_{\f}: L^p(\O_\d, \Rn) & \to L^p(\O, \Rnn) \\
 U & \mapsto K^{s,\d}_{\f} (U) , & U & \mapsto K^{s,\d}_{\f} (U^T)
\end{align*}
are linear and bounded, and in all cases we have the estimate
	\begin{equation*}
	\|K_{\varphi}^{s,\d} (U) \|_{L^p (\O)} \leq (n+s-1) [\f]_{0,1} \left\| \rho_{\d} \right\|_{L^1 (\Rn)} \| U \|_{L^p (\O_{\d})} 
	\end{equation*}
and analogously for $\|K_{\varphi}^{s,\d} (U^T) \|_{L^p (\O)}$.

\item\label{item:trK} $\tr K^{s,\d}_{\varphi}(U^T) = K^{s,\d}_{\varphi}(U)$ for every $U \in L^1 (\O_{\d}, \Rn)$.

\end{enumerate}
\end{lem}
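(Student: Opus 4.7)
The plan is to mimic the proof of Lemma \ref{Lemma nonlocal gradient bound}\,\ref{item:WHa} (inequality \eqref{eq: sobolev embedding 1}--\eqref{eq: sobolev embedding 2}), replacing the difference quotient of $u$ by the kernel $U$ and absorbing the Lipschitz factor coming from $\f$. Note that when $x\in\O$ and $y\in B(x,\d)$ one has $y\in\O_\d$, so the Lipschitz bound $|\f(x)-\f(y)|\le[\f]_{C^{0,1}(\O_\d)}|x-y|$ is available pointwise in the integrand.

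\textbf{Step (a).} For concreteness consider the scalar case $U\in L^p(\O_\d)$; the other three cases are identical modulo tensor bookkeeping, since $|U(y)(x-y)/|x-y||=|U(y)|$ and $|U(y)\otimes(x-y)/|x-y||=|U(y)|$ (and the dot product in the third case is bounded by $|U(y)|$). Taking absolute values inside the integral and applying the Lipschitz bound gives
\[
 |K^{s,\d}_{\f}(U)(x)|\le c_{n,s}[\f]_{C^{0,1}(\O_\d)}\int_{B(x,\d)}\frac{|U(y)|\,w_\d(x-y)}{|x-y|^{n+s-1}}\,dy=(n-1+s)[\f]_{C^{0,1}(\O_\d)}\int_{B(x,\d)}|U(y)|\,\rho_\d(x-y)\,dy,
\]
where I used the definitions $c_{n,s}=(n-1+s)/\g(1-s)$ and $\rho_\d(z)=w_\d(z)/(\g(1-s)|z|^{n-1+s})$. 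After the change of variables $h=x-y$ inside the ball $B(0,\d)$, Minkowski's integral inequality (as in \eqref{eq: sobolev embedding 1}) bounds the $L^p(\O)$ norm of the right-hand side by
\[
 (n-1+s)[\f]_{C^{0,1}(\O_\d)}\int_{B(0,\d)}\rho_\d(h)\left(\int_{\O}|U(x-h)|^p\,dx\right)^{1/p}dh\le(n-1+s)[\f]_{C^{0,1}(\O_\d)}\|\rho_\d\|_{L^1(\Rn)}\|U\|_{L^p(\O_\d)},
\]
because $x-h\in\O_\d$ for all $x\in\O$ and $h\in B(0,\d)$. This is the desired inequality; linearity is immediate from the definition. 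The case $p=\infty$ follows from the same pointwise estimate, replacing Minkowski by the obvious $L^\infty$--$L^1$ bound on the convolution with $\rho_\d$.

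\textbf{Step (b).} This is just a trace computation under the integral sign: for any vector $a\in\Rn$ and unit vector $e\in\Rn$ one has $\tr(a\otimes e)=a\cdot e$. Applying this with $a=U(y)$ and $e=(x-y)/|x-y|$ inside the integral defining $K^{s,\d}_{\f}(U^T)$ yields exactly $K^{s,\d}_{\f}(U)$. The interchange of trace and integral is justified by the integrability established in (a).

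The only genuine point to check carefully is that the various tensorial versions of the bound in (a) use the right pointwise norm inequality (so that the constant does not degrade), but since $(x-y)/|x-y|$ is a unit vector in every case, the estimate from the scalar case transfers verbatim. Thus I do not foresee any serious obstacle; the lemma is essentially a reformulation of the Minkowski-plus-Lipschitz argument already carried out for $D^s_\d$ in Lemma \ref{Lemma nonlocal gradient bound}.
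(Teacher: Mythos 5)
Your proof is correct and takes essentially the same approach as the paper: a pointwise Lipschitz bound on $\f(x)-\f(y)$ combined with Minkowski's integral inequality for part (a) (with the direct $L^\infty$--$L^1$ convolution bound when $p=\infty$), and exchanging trace and integral for part (b). The only cosmetic difference is the order of operations — you take absolute values and apply the Lipschitz estimate before invoking Minkowski, whereas the paper applies Minkowski to the raw integrand first and then uses Lipschitz on the inner $L^p$ norm; both orderings yield the identical constant.
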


\begin{proof}
For \ref{item:Kbounded} we will do the case for the operator $K^{s,\d}_{\f}: L^p(\O_\d) \to L^p(\O, \Rn)$, as the same proof is valid for all four operators, which are clearly linear.
So let $U \in L^p (\O_\d)$.
The steps are similar to the proof of Lemma \ref{Lemma nonlocal gradient bound}.
Assume first $p < \infty$.
By Minkowski's integral inequality
\[
\begin{split}
	 &\left( \int_{\O} \left| \int_{B(x,\d)} \frac{\varphi(x)-\varphi(y)}{|x-y|^{n+s}} U(y) \frac{x-y}{|x-y|}w_\d(x-y) \, dy \right|^p \, dx \right)^{\frac{1}{p}}\leq \\
	 &\int_{B(0,\d)} \left( \int_{\O} \left|  \frac{\f(x) - \f(x-h)}{|h|^{n+s}} U(x-h) w_{\d} (h) \right|^p dx \right)^{\frac{1}{p}} dh .
\end{split}
\]
Now, for all $h\in B(0,\d) \setminus \{0 \}$,
\[
 \begin{split}
 	\left( \int_{\O} \left|  \frac{\f(x) - \f(x-h)}{|h|^{n+s}} U(x-h) w_{\d} (h) \right|^p dx \right)^{\frac{1}{p}} \leq& [\f]_{0,1} \frac{w_{\d} (h)}{|h|^{n+s-1}} \left( \int_{\O} \left|U(x-h)\right|^p  \, dx \right)^{\frac{1}{p}} \\
 	 \leq&  [\f]_{0,1} \g (1-s) \rho_{\d} (h) \left\| U \right\|_{L^p (\O_{\d})} .
 \end{split}
\]
Therefore,
\[
 \left\| K^{s,\d}_{\f} (U) \right\|_{L^p (\O)} \leq (n+s-1) [\f]_{0,1} \left\| \rho_{\d} \right\|_{L^1 (\Rn)} \left\| U \right\|_{L^p (\O_{\d})} ,
\]
which completes the proof in this case.

The proof for $p=\infty$ is even simpler: for $U \in L^{\infty} (\O_\d)$ and a.e.\ $x \in \O$,
\[
 \left| K^{s,\d}_{\varphi}(U) (x) \right| \leq |c_{n,s}| [\f]_{0,1} \left\| U \right\|_{L^{\infty} (\O_{\d})} \int_{B(x,\d)} \frac{w_{\d} (x-y)}{|x-y|^{n+s-1}} \, dy = (n+s-1)  [\f]_{0,1} \left\| U \right\|_{L^{\infty} (\O_{\d})} \left\| \rho_{\d} \right\|_{L^1 (\Rn)} .
\]
Therefore,
\[
  \left\| K^{s,\d}_{\varphi}(U) \right\|_{L^{\infty} (\O)} \leq (n+s-1)  [\f]_{0,1} \left\| \rho_{\d} \right\|_{L^1 (\Rn)} \left\| U \right\|_{L^{\infty} (\O_{\d})} ,
\]
which concludes the proof of \ref{item:Kbounded}.

As a consequence of the previous argument, the integrals of Definition \ref{de:Ksd} are absolutely convergent for a.e.\ $x \in \O$.
For such $x$ and $U \in L^1 (\O_{\d}, \Rn)$ we have
\begin{align*}
 \tr K_{\f}^{s,\d} (U^T) (x) & = c_{n,s} \int_{B(x,\d)} \tr \left( \frac{\varphi(x)-\varphi(y)}{|x-y|^{n+s}} U(y) \otimes \frac{x-y}{|x-y|} w_\d(x-y) \right) dy \\
 & = c_{n,s} \int_{B(x,\d)} \frac{\varphi(x)-\varphi(y)}{|x-y|^{n+s}} U(y) \cdot \frac{x-y}{|x-y|} w_\d(x-y) \, dy = K_{\f}^{s,\d} (U) (x) ,
\end{align*}
which proves \ref{item:trK}.
\end{proof}

Now we introduce a product formula for the nonlocal gradient.

\begin{lem} \label{lem: Gradiente no local producto}
Let $1 \leq p < \infty$ and $\f \in C^{\infty} (\bar{\O}_{\d})$.

\begin{enumerate}[label=\alph*)]
\item\label{item:derivativeproductA} If $g \in H^{s,p,\d}(\O)$ then $\f g \in H^{s,p,\d} (\O)$ and
\begin{equation*}
 D_\d^s (\f g) = \f \, D_\d^s g + K_{\f}^{s,\d} (g) .
\end{equation*}
 
\item\label{item:derivativeproductB} If $g \in H^{s,p,\d}(\O, \Rn)$ then $\f g \in H^{s,p,\d} (\O, \Rn)$ and
\[
 D_\d^s(\f g) = \f \, D_\d^s g + K_{\f}^{s,\d} (g^T) .
\]
\end{enumerate}
\end{lem}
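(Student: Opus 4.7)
The plan is to first establish the identity pointwise for smooth functions, and then pass to the limit using the definition of $H^{s,p,\d}(\O)$ as a closure together with the continuity estimate from Lemma \ref{Lema operador lineal delta}\,\ref{item:Kbounded}.

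The starting point for part \ref{item:derivativeproductA} is the algebraic decomposition
\[
 \varphi(x)g(x)-\varphi(y)g(y) = \varphi(x)\bigl(g(x)-g(y)\bigr) + \bigl(\varphi(x)-\varphi(y)\bigr)g(y).
\]
Plugging this into the integrand of Definition \ref{def: nonlocal gradient}\,\ref{item:Dsdu} applied to $\varphi g$, splitting the integral, and identifying the two pieces as $\varphi(x)D_\d^s g(x)$ and $K_\varphi^{s,\d}(g)(x)$ respectively (using Definition \ref{de:Ksd}), yields the formula whenever $g \in C_c^\infty(\Rn)$ and $\varphi \in C^{\infty}$; the same decomposition with the tensor product $\otimes\frac{x-y}{|x-y|}$ instead of the scalar product gives part \ref{item:derivativeproductB}.

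To pass from smooth $g$ to $g\in H^{s,p,\d}(\O)$, let $\{g_j\}_{j\in\N}\subset C_c^\infty(\Rn)$ be a defining sequence with $g_j\to g$ in $L^p(\O_\d)$ and $D_\d^s g_j\to D_\d^s g$ in $L^p(\O)$. Since $\varphi\in C^\infty(\bar\O_\d)$, I fix an extension $\tilde\varphi\in C_c^\infty(\Rn)$ with $\tilde\varphi\equiv\varphi$ on $\bar\O_\d$ (obtained by multiplying any smooth extension of $\varphi$ by a cutoff). Then $\tilde\varphi g_j\in C_c^\infty(\Rn)$ and the pointwise product formula of the previous paragraph applies:
\[
 D_\d^s(\tilde\varphi g_j)(x) = \tilde\varphi(x)\,D_\d^s g_j(x) + K_{\tilde\varphi}^{s,\d}(g_j)(x), \qquad x\in\Rn.
\]
For $x\in\O$ the integrals defining $D_\d^s$ and $K_{\tilde\varphi}^{s,\d}$ only sample $\tilde\varphi$ inside $B(x,\d)\subset\O_\d$, where it coincides with $\varphi$, so on $\O$ the right-hand side equals $\varphi\,D_\d^s g_j + K_\varphi^{s,\d}(g_j)$. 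Also $\tilde\varphi g_j\to\varphi g$ in $L^p(\O_\d)$ because $\tilde\varphi$ is bounded there and $\tilde\varphi=\varphi$ on $\O_\d$.

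It remains to pass to the limit in $L^p(\O)$. Boundedness of $\varphi$ on $\O$ gives $\varphi\,D_\d^s g_j\to \varphi\,D_\d^s g$ in $L^p(\O)$, while Lemma \ref{Lema operador lineal delta}\,\ref{item:Kbounded} applied to the operator $K_\varphi^{s,\d}:L^p(\O_\d)\to L^p(\O,\Rn)$ (and to $K_\varphi^{s,\d}(\,\cdot\,^T)$ in case \ref{item:derivativeproductB}) together with $g_j\to g$ in $L^p(\O_\d)$ yields $K_\varphi^{s,\d}(g_j)\to K_\varphi^{s,\d}(g)$ in $L^p(\O)$. Hence $\{\tilde\varphi g_j\}$ is a Cauchy sequence in the $H^{s,p,\d}(\O)$-norm converging to $\varphi g$, which by Definition \ref{def: nonlocal gradient 2} and Definition \ref{de:Hspd} shows simultaneously that $\varphi g\in H^{s,p,\d}(\O)$ and that $D_\d^s(\varphi g)=\varphi\,D_\d^s g+K_\varphi^{s,\d}(g)$. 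I expect no serious obstacle: the only subtlety is making the product $\varphi g_j$ compactly supported so that the approximation lies in $C_c^\infty(\Rn)$, which the extension $\tilde\varphi$ handles cleanly since the nonlocal gradient at $x\in\O$ only sees values in $\O_\d$.
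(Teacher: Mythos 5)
Your proposal is correct and follows essentially the same route as the paper: the same algebraic decomposition of $\varphi(x)g(x)-\varphi(y)g(y)$ to prove the pointwise identity for smooth data, the same extension of $\varphi$ to a $C_c^\infty(\Rn)$ function (the paper compresses your careful remark about only sampling $\tilde\varphi$ on $\O_\d$ into the one-liner ``we can suppose $\varphi\in C_c^\infty(\Rn)$''), and the same limiting argument via the boundedness of $K_\varphi^{s,\d}$ from Lemma \ref{Lema operador lineal delta}.
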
 

\begin{proof}
We show \ref{item:derivativeproductA}; the proof of \ref{item:derivativeproductB} follows from an application of \ref{item:derivativeproductA} componentwise.
The function $\f$ has a $C^{\infty}_c (\Rn)$ extension, so we can suppose $\f \in C^{\infty}_c (\Rn)$.
First we assume $g \in C^{\infty}_c(\Rn)$.
For all $x \in \Rn$ we have
\begin{equation}\label{eq:gradientproduct1}
\begin{split}
	 D_\d^s (\f g) (x) &= c_{n,s} \int_{B(x,\d)} \frac{\f(x) g(x) - \f(x) g(y) + \f(x) g(y) - \f(y) g(y)}{|x-y|^{n+s}}\frac{x-y}{|x-y|} w_\d(x-y) \, dy \\
	 &= \f(x) D_\d^s g(x)+ K_{\f}^{s,\d} (g) (x) .
\end{split}
 \end{equation}

Now we consider $g \in H^{s, p,\d} (\O)$ and a sequence $\{ g_j \}_{j \in \N} \subset C^{\infty}_c (\Rn)$ converging to $g$ in $H^{s,p,\d}(\O)$.
Then $\{ \varphi g_j \}_{j \in \N}$ is a sequence in $C^{\infty}_c (\Rn)$ that clearly converges to $\varphi g$ in $L^p (\O_\d)$.
Let us check that $\{D_\d^s ( \varphi g_j) \}_{j \in \N}$ converges in $L^p (\O, \Rn)$.
Owing to \eqref{eq:gradientproduct1} we have
\[
D_\d^s ( \varphi g_j) = \varphi \, D_\d^s g_j + K^{s,\d}_{\varphi} (g_j) , \qquad j \in \N .
\]
Since $D_\d^s g_j \to D_\d^s g$ in $L^p (\O, \Rn)$ as $j\to \infty$, we also have that $\varphi \, D_\d^s g_j \to \varphi \, D_\d^s g$ in $L^p (\O, \Rn)$.
By Lemma \ref{Lema operador lineal delta}, as $g_j \to g$ in $L^p (\O_\d)$ as $j \to \infty$, we obtain that $K^{s,\d}_{\varphi} (g_j) \to K^{s,\d}_{\varphi} (g)$ in $L^p (\O, \Rn)$.
This shows the conclusion of \ref{item:derivativeproductA}.
\end{proof}


For $\phi \in H^{s,p,\d} (\O, \Rn)$ there is a natural relation between $D_\d^s \phi$ and $\diver_\d^s \phi$.
We also state the product formula for the divergence.
As usual, the divergence of a matrix is the vector whose components are the divergence of the rows.

\begin{lem}\label{le:trazadiv}
Let $1 \leq p < \infty$.
Let $\phi \in H^{s,p,\d} (\O, \Rn)$.
Then $\diver_\d^s \phi \in L^p (\O)$ and $\tr D_\d^s \phi = \diver_\d^s \phi$ a.e.\ in $\O$.
Moreover, for any $\f \in C^{\infty}(\bar{\O}_{\d})$, 
\begin{displaymath}
 \diver_\d^s (\f \phi)  = \f \diver_\d^s \phi + K_{\f}^{s,\d} (\phi)  ,
\end{displaymath}
and for any $\Phi \in H^{s,p,\d} (\O, \Rnn)$,
\[
 \diver_\d^s (\f \Phi)  = \f \diver_\d^s \Phi + K_{\f}^{s,\d} (\Phi) .
\]
\end{lem}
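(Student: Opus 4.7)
The plan is to prove the three claims (existence and $L^p$-integrability of $\diver^s_\d \phi$ together with the trace identity, the product formula for the divergence acting on vectors, and the product formula acting on matrices) by first verifying the pointwise trace identity for smooth compactly supported functions and then extending by the density built into the definitions of $H^{s,p,\d}$ and of $\diver^s_\d$.

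First I would observe that for $\phi \in C^\infty_c(\R^n,\R^n)$ the trace identity $\tr D^s_\d \phi = \diver^s_\d \phi$ is immediate: comparing the two integral expressions in Definition \ref{def: nonlocal gradient}, the matrix $D^s_\d \phi(x)$ is built as an integral of outer products $\frac{\phi(x)-\phi(y)}{|x-y|} \otimes \frac{x-y}{|x-y|}$, while $\diver^s_\d \phi(x)$ is built as the corresponding integral of dot products; since $\tr(a \otimes b) = a \cdot b$ and the integrand is absolutely summable, one may take the trace inside the integral.

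Next, given $\phi \in H^{s,p,\d}(\O,\R^n)$, pick a defining sequence $\{\phi_j\} \subset C^\infty_c(\R^n,\R^n)$ with $\phi_j \to \phi$ in $L^p(\O_\d,\R^n)$ and $D^s_\d \phi_j \to D^s_\d \phi$ in $L^p(\O,\R^{n\times n})$. Taking the trace of the latter (a continuous linear operation), the sequence $\diver^s_\d \phi_j = \tr D^s_\d \phi_j$ converges in $L^p(\O)$ to $\tr D^s_\d \phi$. Since $\O$ is bounded, convergence in $L^p$ implies convergence in $L^1$, and consequently $\phi$ and $\{\phi_j\}$ satisfy the hypotheses of Definition \ref{def: nonlocal gradient 2}\,b). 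By the well-posedness of that definition (i.e.\ independence of the approximating sequence, recalled after Definition \ref{def: nonlocal gradient 2}), $\diver^s_\d \phi = \tr D^s_\d \phi$ a.e.\ in $\O$, which both yields $\diver^s_\d \phi \in L^p(\O)$ and the trace identity.

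For the vector product formula, I apply Lemma \ref{lem: Gradiente no local producto}\,\ref{item:derivativeproductB} to obtain
\[
 D^s_\d (\f \phi) = \f\, D^s_\d \phi + K^{s,\d}_\f (\phi^T) \quad \text{in } L^p(\O,\R^{n\times n}),
\]
then take traces of both sides. Since $\tr(\f A) = \f \tr A$ and $\tr K^{s,\d}_\f(\phi^T) = K^{s,\d}_\f(\phi)$ by Lemma \ref{Lema operador lineal delta}\,\ref{item:trK}, combining this with the trace identity just established gives precisely $\diver^s_\d (\f \phi) = \f \diver^s_\d \phi + K^{s,\d}_\f(\phi)$. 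Finally, the matrix product formula follows by applying the vector identity to each row of $\Phi$ and reassembling, noting that both $D^s_\d$ and $K^{s,\d}_\f$ act row-wise on matrices.

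The only delicate point is making sure that the trace of the gradient, computed via a specific approximating sequence coming from $H^{s,p,\d}$, is legitimately the object called $\diver^s_\d \phi$ in Definition \ref{def: nonlocal gradient 2}\,b); this is not a real obstacle because Definition \ref{def: nonlocal gradient 2} is known to be sequence-independent, but it must be invoked explicitly. Everything else reduces to the smooth identity and the continuity (Lemma \ref{Lema operador lineal delta}\,\ref{item:Kbounded}) and pointwise trace (Lemma \ref{Lema operador lineal delta}\,\ref{item:trK}) properties of $K^{s,\d}_\f$.
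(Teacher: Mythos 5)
Your proposal is correct and follows essentially the same route as the paper's own proof: establish the pointwise trace identity $\tr D^s_\d \phi = \diver^s_\d \phi$ for smooth compactly supported functions, pass to $H^{s,p,\d}$ by density and the continuity of the trace, and then obtain the product formulas by taking traces in Lemma \ref{lem: Gradiente no local producto}\,\ref{item:derivativeproductB} together with Lemma \ref{Lema operador lineal delta}\,\ref{item:trK}, treating the matrix case componentwise. Your explicit appeal to the sequence-independence of Definition \ref{def: nonlocal gradient 2}\,b) is a slightly more careful phrasing of a step the paper leaves implicit, but the argument is the same.
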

\begin{proof}
By extension, we can assume $\f \in C_c^{\infty}(\Rn)$.
Let $\{ \phi_j \}_{j \in \N} \subset C^{\infty}_c (\Rn, \Rn)$ be a sequence converging to $\phi$ in $ H^{s,p,\d} (\O, \Rn)$.
Having in mind that the integrals of Definition \ref{def: nonlocal gradient} are absolutely convergent, we obtain, for each $x \in \O$ and $j \in \N$,
\begin{align*}
 \tr D_\d^s \phi_j (x) &= c_{n,s} \tr\left(\int_{B(x,\d)}  \frac{\phi_j(x) - \phi_j(y)}{|x-y|^{n+s}} \otimes \frac{x-y}{|x-y|}w_\d(x-y)dy\right) \\
 & = c_{n,s} \int_{B(x,\d)} \tr \left(\frac{\phi_j(x) - \phi_j(y)}{|x-y|^{n+s}} \otimes \frac{x-y}{|x-y|} w_\d(x-y) \right) dy \\ 
 &= c_{n,s} \int_{B(x,\d)} \frac{\phi_j(x) - \phi_j(y)}{|x-y|^{n+s}} \cdot \frac{x-y}{|x-y|} w_\d(x-y) dy = \diver_\d^s \phi_j (x) .
\end{align*}
Since $D_\d^s \phi_j \to D_\d^s \phi$ in $L^p (\O, \Rnn)$, we obtain, succesively, that $\tr D_\d^s \phi_j \to \tr D_\d^s \phi$ in $L^p (\O)$, $\diver_\d^s \phi \in L^p (\O)$ and $\tr D_\d^s \phi = \diver_\d^s \phi$ in $L^p (\O)$.

Now we prove the product formula.
By the result above and applying Lemmas \ref{lem: Gradiente no local producto}\,\ref{item:derivativeproductB} and \ref{Lema operador lineal delta}\,\ref{item:trK}, we obtain
\[
 \diver_\d^s (\f \phi) = \tr D_\d^s (\f \phi)  = \tr \left( \f \, D_\d^s \phi + K_{\f}^{s,\d} (\phi^T) \right) = \f \div_\d^s \phi +  K_{\f}^{s,\d} (\phi) .
\]
The formula for $\Phi$ is immediate by applying componentwise the above formula.
\end{proof}

\section{From nonlocal to local}\label{se:from}

In this section we show that nonlocal gradients are in fact gradients of another function.
This idea was exploited in \cite{KrSc22} in the fractional context, whose fully analogue result in this nonlocal case (nonlocal gradients are gradients, and vice versa) is shown in \cite{CuKrSc22}; here we also see one implication: nonlocal gradients are gradients (Lemma \ref{le:DsdD}). 

We first recall from \cite[Lemma 4.2]{BeCuMo22} that $Q^s_{\d}$ is a radial $L^1 (\Rn)$ function with support in $B(0,\d)$ such that
\[
 \nabla Q_\d^s(x)=- (n-1+s) \frac{\rho_\delta{(x)}}{|x|}\frac{x}{|x|} , \qquad x \in \Rn \setminus \{ 0 \} .
\]

We will do convolutions with $Q^s_{\d}$ with functions defined in $\O_{\d}$.
Thus, with a small abuse of notation, given $u : \O_{\d} \to \R$ we will write $Q^s_{\d} * u$ as the function defined in $\O$ by
\[
 Q^s_{\d} * u (x) = \int_{B(x,\d)} Q^s_{\d} (x-y) u (y) \, dy ,
\]
whenever the integral is well defined.
In truth, $Q^s_{\d} * u$ is the restriction to $\O$ of the convolution $Q^s_{\d} * \bar{u}$, where $\bar{u} : \Rn \to \R$ is the extension by zero of $u$.
We show the boundedness of this operator.

\begin{lem}\label{le:conv}
For any $1 \leq p \leq \infty$, the map $u \mapsto Q^s_{\d} * u$ is linear and bounded from $L^p (\O_{\d})$ to $L^p (\O)$.
\end{lem}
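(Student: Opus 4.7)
The plan is to recognize this statement as a direct instance of Young's convolution inequality, once we reduce the problem to a convolution on the whole space.

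Linearity is immediate from the linearity of the integral, so I will focus on the norm bound. Given $u \in L^p(\O_\d)$, let $\bar u : \Rn \to \R$ denote its extension by zero outside $\O_\d$, so that $\|\bar u\|_{L^p(\Rn)} = \|u\|_{L^p(\O_\d)}$. Because $Q^s_\d$ is supported in $\overline{B(0,\d)}$, for every $x \in \O$ we have
\[
(Q^s_\d * \bar u)(x) = \int_{\Rn} Q^s_\d(x-y)\,\bar u(y)\,dy = \int_{B(x,\d)} Q^s_\d(x-y)\, u(y)\,dy = (Q^s_\d * u)(x),
\]
since $B(x,\d) \subset \O_\d$ for $x \in \O$, and $\bar u$ agrees with $u$ there.

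Now, recalling from \cite[Lemma 4.2]{BeCuMo22} that $Q^s_\d \in L^1(\Rn)$, Young's convolution inequality yields
\[
\| Q^s_\d * \bar u \|_{L^p(\Rn)} \leq \| Q^s_\d \|_{L^1(\Rn)} \, \| \bar u \|_{L^p(\Rn)} .
\]
Restricting the left-hand side to $\O$ and using the identity above together with $\|\bar u\|_{L^p(\Rn)} = \|u\|_{L^p(\O_\d)}$ gives
\[
\| Q^s_\d * u \|_{L^p(\O)} \leq \| Q^s_\d \|_{L^1(\Rn)} \, \| u \|_{L^p(\O_\d)} ,
\]
which is the desired bound, valid for every $1 \leq p \leq \infty$.

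There is no real obstacle here: the only mild care needed is to ensure that the convolution-over-$B(x,\d)$ defined in the excerpt coincides with the full-space convolution of the zero extension, so that Young's inequality applies verbatim. The rest is bookkeeping.
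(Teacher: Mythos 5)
Your proof is correct and follows essentially the same route as the paper: extend $u$ by zero to $\bar u$, observe that $Q^s_\d * u$ on $\O$ agrees with the full-space convolution $Q^s_\d * \bar u$, and apply Young's inequality with $Q^s_\d \in L^1(\Rn)$. The only difference is that you spell out the support/domain bookkeeping in more detail, which the paper leaves implicit.
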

\begin{proof}
Given any $u \in L^p (\O_{\d})$, we denote by $\bar{u}$ its extension to $\Rn$ by zero and use Young's inequality, so as to obtain that
\[
 \left\| Q^s_{\d} * u \right\|_{L^p (\O)} = \left\| Q^s_{\d} * \bar{u} \right\|_{L^p (\O)} \leq \left\| Q^s_{\d} * \bar{u} \right\|_{L^p (\Rn)} \leq \left\| Q^s_{\d} \right\|_{L^1 (\Rn)} \left\| \bar{u} \right\|_{L^p (\Rn)} = \left\| Q^s_{\d} \right\|_{L^1 (\Rn)} \left\| u \right\|_{L^p (\O_{\d})} .
\]
\end{proof}

The first part (the smooth case) of the following lemma was essentially proved in \cite{BeCuMo22}.
We provide the necessary details to arrive at the precise formulation we need, as well as to prove the $H^{s,p,\d}$ version.
As mentioned before, the result is akin to that simultaneously proved in \cite[Prop.\ 2.15]{CuKrSc22}.

\begin{lem}\label{le:DsdD}
\begin{enumerate}[label=\alph*)]
\item\label{item:DsdDa}
For all $u \in C^{\infty}_c (\Rn)$ we have that $Q^s_{\d} * u \in C^{\infty}_c (\Rn)$ and
\[
 D^s_{\d} u = D (Q^s_{\d} * u) = Q^s_{\d} * D u .
\]
Moreover, for all $\phi \in C^{\infty}_c (\Rn, \Rn)$ we have that
\[
 \diver^s_{\d} \phi = \div (Q^s_{\d} * \phi) = Q^s_{\d} * \diver \phi.
\]

\item\label{item:DsdDb}
Let $1 \leq p < \infty$.
Then the map $u \mapsto Q^s_{\d} * u$ is linear and bounded from $H^{s,p,\d} (\O)$ to $W^{1,p} (\O)$.
Moreover, for all $u \in H^{s,p,\d} (\O)$,
\[
 D^s_{\d} u = D (Q^s_{\d} * u) \quad \text{in } \O ,
\]
and for all $\phi \in H^{s,p,\d} (\O, \Rn)$,
\[
 \diver^s_{\d} \phi = \div (Q^s_{\d} * \phi) \quad \text{in } \O .
\]

\end{enumerate}
\end{lem}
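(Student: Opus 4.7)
The plan is to prove part \ref{item:DsdDa} first by a direct computation exploiting the formula for $\nabla Q_\d^s$ recalled at the start of the section, and then to lift part \ref{item:DsdDb} from it by a density/distributional-derivative argument using Lemma \ref{le:conv}.

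For part \ref{item:DsdDa}, let $u \in C^{\infty}_c(\Rn)$. Since $Q_\d^s \in L^1(\Rn)$ has compact support in $\overline{B(0,\d)}$ and $u$ is smooth with compact support, standard properties of convolution give $Q_\d^s * u \in C^{\infty}(\Rn)$ with $\supp(Q_\d^s*u)\subset \supp u + B(0,\d)$, and
\[
 D(Q_\d^s*u)(x) = \int_{\Rn} Q_\d^s(x-y)\, Du(y)\,dy = (Q_\d^s * Du)(x).
\]
To identify $D(Q_\d^s*u)$ with $D_\d^s u$, I would first rewrite $D_\d^s u$ using the relation $c_{n,s}\frac{w_\d(z)}{|z|^{n-1+s}} = (n-1+s)\rho_\d(z)$ and then the given identity $\nabla Q_\d^s(z) = -(n-1+s)\frac{\rho_\d(z)}{|z|}\frac{z}{|z|}$ to obtain
\[
 D_\d^s u(x) = -\int_{\Rn} \bigl(u(x)-u(y)\bigr)\,\nabla Q_\d^s(x-y)\,dy.
\]
Splitting the integrand, the first piece $-u(x)\int_{\Rn} \nabla Q_\d^s(x-y)\,dy$ vanishes because $Q_\d^s$ is radial and compactly supported (one integrates a total derivative over $\Rn$), and the second piece $\int_{\Rn} u(y)\,\nabla_x Q_\d^s(x-y)\,dy$ is precisely $D(Q_\d^s * u)(x)$ by differentiating under the integral sign. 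The divergence identity is then obtained by applying this componentwise and summing, noting that both $\diver_\d^s$ and the classical divergence are just traces of the associated gradient operator in the sense of Lemma \ref{le:trazadiv}.

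For part \ref{item:DsdDb}, given $u \in H^{s,p,\d}(\O)$, I would fix an approximating sequence $\{u_j\}_{j\in\N}\subset C^{\infty}_c(\Rn)$ with $u_j\to u$ in $L^p(\O_\d)$ and $D^s_\d u_j \to D^s_\d u$ in $L^p(\O,\Rn)$. By Lemma \ref{le:conv}, $Q_\d^s * u_j \to Q_\d^s*u$ in $L^p(\O)$, while by part \ref{item:DsdDa}, $D(Q_\d^s*u_j) = D_\d^s u_j \to D_\d^s u$ in $L^p(\O,\Rn)$. Testing $D(Q_\d^s*u_j)$ against any $\psi \in C^{\infty}_c(\O,\Rn)$ and passing to the limit identifies $D_\d^s u$ as the distributional gradient of $Q_\d^s*u$ on $\O$, so $Q_\d^s*u \in W^{1,p}(\O)$ with $D(Q_\d^s*u) = D_\d^s u$ a.e.\ in $\O$. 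The linearity and the bound
\[
 \lVert Q_\d^s*u\rVert_{W^{1,p}(\O)} \leq \lVert Q_\d^s\rVert_{L^1(\Rn)} \lVert u\rVert_{L^p(\O_\d)} + \lVert D_\d^s u\rVert_{L^p(\O)} \leq C \lVert u\rVert_{H^{s,p,\d}(\O)}
\]
then follow from Lemma \ref{le:conv}. The divergence identity is proved by the same density argument, replacing $u_j$ with an approximating vector-valued sequence and invoking Lemma \ref{le:trazadiv}.

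I expect the only mildly delicate step to be the manipulation yielding $D_\d^s u = -\int (u(x)-u(y))\nabla Q_\d^s(x-y)\,dy$ and the vanishing of $\int \nabla Q_\d^s(x-y)\,dy$; both are unproblematic once one is careful that $Q_\d^s$ may fail to be smooth at the origin, but since $u$ is $C^{\infty}_c$ the difference $u(x)-u(y) = O(|x-y|)$ makes the integral absolutely convergent near the singularity, so the splitting and the symmetry argument are legitimate. The rest is routine from density and continuity of convolution on $L^p$.
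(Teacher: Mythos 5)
Your part \ref{item:DsdDb} and the general architecture (treat smooth functions first, then pass to $H^{s,p,\d}$ by density, Lemma \ref{le:conv}, and closedness of the distributional gradient, with Lemma \ref{le:trazadiv} giving the divergence statements) coincide with the paper's proof and are fine. The problem is in your direct derivation of $D^s_\d u = D(Q^s_\d * u)$ for $u\in C^{\infty}_c(\Rn)$, which the paper does not re-prove but imports from \cite[Prop.\ 4.3]{BeCuMo22} in the form $D^s_\d u = Q^s_\d * Du$. Your identity $D^s_\d u(x) = -\int (u(x)-u(y))\,\nabla Q^s_\d(x-y)\,dy$ is correct and absolutely convergent, but $|\nabla Q^s_\d(z)| = (n-1+s)\rho_\d(z)/|z| \sim C|z|^{-(n+s)}$ near the origin is \emph{not} locally integrable, so the two pieces you split it into are not separately convergent integrals; absolute convergence of the combined integrand does not legitimize the splitting. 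Both of your subsequent steps treat the pieces as genuine integrals: the first piece cannot be dispatched by ``integrating a total derivative over $\Rn$'' (that presupposes $\nabla Q^s_\d \in L^1$), and, more seriously, the second piece cannot be identified with $D(Q^s_\d * u)(x)$ ``by differentiating under the integral sign'': putting the derivative on the kernel is exactly the move that is forbidden when $\nabla Q^s_\d \notin L^1_{\mathrm{loc}}$, and it is the whole point of the lemma. The only differentiation under the integral sign that is available is the one you already used, $D(Q^s_\d*u)=Q^s_\d*Du$, so as written the key identity is asserted rather than proved.

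The repair is an $\e$-regularization (essentially the content of the cited result): write $D^s_\d u(x)$ as $\lim_{\e\to 0}$ of the integral over $\{\e<|x-y|<\d\}$; there the $u(x)$-term vanishes exactly by oddness of $\nabla Q^s_\d$ (so the symmetry argument does work, but only in this principal-value sense); in the $u(y)$-term integrate by parts to move the derivative onto $u$, producing $\int_{\e<|x-y|}Q^s_\d(x-y)Du(y)\,dy$ plus a boundary term on $\{|x-y|=\e\}$. Since $Q^s_\d(\e)\sim C\e^{-(n-1+s)}$ and $\int_{|x-y|=\e}\nu\,dS=0$, that boundary term equals $Q^s_\d(\e)\int_{|x-y|=\e}\bigl(u(y)-u(x)\bigr)\nu\,dS = O(\e^{1-s})\to 0$, giving $D^s_\d u = Q^s_\d * Du = D(Q^s_\d*u)$. (Alternatively, verify the identity distributionally by testing against $\psi\in C^{\infty}_c(\Rn)$ and using Fubini, or simply quote \cite[Prop.\ 4.3]{BeCuMo22} as the paper does.) With that step filled in, the rest of your argument goes through.
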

\begin{proof}
We start with \ref{item:DsdDa}.
We have from \cite[Prop.\ 4.3]{BeCuMo22} that $D^s_{\d} u = Q^s_{\d} * D u$.
In fact, owing to a classic result in convolution (see, e.g., \cite[Props.\ 4.18 and 4.20]{Brezis}), $Q^s_{\d} * u \in C^{\infty}_c (\Rn)$ and $D(Q^s_{\d} * u) = Q^s_{\d} * D u$.

We now apply the equality $Q^s_{\d} * D \phi = D^s_{\d} \phi$ as well as Lemma \ref{le:trazadiv} to conclude that
\[
 Q^s_{\d} * \diver \phi = Q^s_{\d} * (\tr D \phi) = \tr (Q^s_{\d} * D \phi) = \tr D^s_{\d} \phi = \diver^s_{\d} \phi 
\]
and, additionally,
\[
 \tr (Q^s_{\d} * D \phi) = \tr D (Q^s_{\d} * \phi) = \div (Q^s_{\d} * \phi) .
\]

Now we show \ref{item:DsdDb}.
Let $\{ u_j \}_{j \in \N}$ be a sequence in $C^{\infty}_c (\Rn)$ converging to $u$ in $H^{s,p,\d} (\O)$.
Since $u_j \to u$ in $L^p (\O_{\d})$, by Lemma \ref{le:conv}, $Q^s_{\d} * u_j \to Q^s_{\d} * u$ in $L^p (\O)$.
On the other hand, $D^s_{\d} u_j \to D^s_{\d} u$ in $L^p (\O, \Rn)$ and, by \ref{item:DsdDa}, $D^s_{\d} u_j = D (Q^s_{\d} * u_j)$ for each $j \in \N$.
By the locality and closedness of the derivative operator, $Q^s_{\d} * u \in W^{1,p} (\O)$ and $D (Q^s_{\d} * u) = D^s_{\d} u$.
The same proof also shows the boundedness of the operator $u \mapsto Q^s_{\d} * u$ from $H^{s,p,\d} (\O)$ to $W^{1,p} (\O)$.
Moreover, by Lemma \ref{le:trazadiv},
\[
 \diver^s_{\d} \phi = \tr D^s_{\d} \phi = \tr D (Q^s_{\d} * \phi) = \div (Q^s_{\d} * \phi) .
\]
\end{proof}

\section{Nonlocal Piola's Identity, integration by parts of the determinant and weak continuity}\label{se:Piola}

In this section we adapt three classical results in the theory of polyconvexity to the nonlocal context: Piola's identity, integration by parts of the determinant and weak continuity of the minors.

Recall that the classical Piola identity asserts that, for smooth enough functions $u : \Omega \subset \Rn \to \Rn$ one has $\div \cof Du= 0$.
Of course, $\cof$ denotes the cofactor matrix, which satisfies $\cof A \, A^T = (\det A) \, I$ for every $A \in \Rnn$. 


\begin{prop}
\begin{enumerate}[label=\alph*)]
\item\label{item:PiolaA}
For all $u \in C^{\infty}_c (\Rn, \Rn)$,
\[
 \diver^s_{\d} \cof D^s_{\d} u = \div \cof D^s_{\d} u = \diver^s_{\d} \cof D u = 0 .
\]

\item\label{item:PiolaB}
For all $u \in H^{s,p,\d} (\O, \Rn)$ with $p \geq n-1$ and all $\f \in C^{\infty}_c (\O_{-\d})$,
\begin{equation*}
 \int_{\O} \cof D^s_{\d} u \, D^s_{\d} \f \, dx = 0 ,
\end{equation*}
while for all $\f \in C^{\infty}_c (\O)$,
\[
 \int_{\O} \cof D^s_{\d} u \, D \f \, dx = 0 .
\]
\end{enumerate}
\end{prop}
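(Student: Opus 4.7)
The guiding idea is that Lemma \ref{le:DsdD} reduces everything to the classical Piola identity applied to $U := Q^s_{\d} * u$.

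For part \ref{item:PiolaA}, since $u \in C^{\infty}_c(\Rn, \Rn)$ and $Q^s_{\d}$ has compact support, the function $U$ lies in $C^{\infty}_c(\Rn, \Rn)$ and $DU = D^s_{\d} u$ by Lemma \ref{le:DsdD}\,\ref{item:DsdDa}. The classical Piola identity applied to $U$ then gives $\div \cof D^s_{\d} u = \div \cof DU = 0$, and the same identity applied directly to $u$ gives $\div \cof Du = 0$. Since both $\cof D^s_{\d} u$ and $\cof Du$ lie in $C^{\infty}_c(\Rn, \Rnn)$, a row-by-row application of Lemma \ref{le:DsdD}\,\ref{item:DsdDa} yields $\diver^s_{\d} \cof D^s_{\d} u = Q^s_{\d} * \div \cof D^s_{\d} u = 0$ and likewise $\diver^s_{\d} \cof Du = 0$.

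For part \ref{item:PiolaB} I would proceed by density. Pick $\{u_j\}_{j \in \N} \subset C^{\infty}_c(\Rn, \Rn)$ with $u_j \to u$ in $H^{s,p,\d}(\O, \Rn)$, so $D^s_{\d} u_j \to D^s_{\d} u$ in $L^p(\O, \Rnn)$. Because $p \geq n-1$, the polynomial estimate $|\cof A - \cof B| \leq C (|A|^{n-2} + |B|^{n-2}) |A-B|$ combined with H\"older's inequality (with exponents $p/(n-2)$ and $p$, conjugate to $p/(n-1)$) gives $\cof D^s_{\d} u_j \to \cof D^s_{\d} u$ strongly in $L^{p/(n-1)}(\O, \Rnn)$. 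For a test $\f \in C^{\infty}_c(\O)$, classical integration by parts (no boundary term since $\spt \f \ssubset \O$) together with part \ref{item:PiolaA} yields
\[
 \int_\O \cof D^s_{\d} u_j \, D\f \, dx = - \int_\O \f \, \div \cof D^s_{\d} u_j \, dx = 0 ,
\]
and passing to the limit using the strong convergence of cofactors and the boundedness of $D\f$ establishes the second identity. For the nonlocal one, given $\f \in C^{\infty}_c(\O_{-\d})$, note that $\spt(Q^s_{\d} * \f) \subset \spt \f + B(0,\d) \subset \O_{-\d} + B(0,\d) \subset \O$, so $Q^s_{\d} * \f \in C^{\infty}_c(\O)$; since Lemma \ref{le:DsdD} gives $D^s_{\d} \f = D(Q^s_{\d} * \f)$, the first identity reduces immediately to the second one, applied to the test function $Q^s_{\d} * \f$.

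The only point needing genuine care is the strong $L^{p/(n-1)}$-convergence of the cofactors, which is where the hypothesis $p \geq n-1$ enters and which is a clean H\"older computation rather than a substantive obstacle. The rest is mechanical: part \ref{item:PiolaA} is essentially a tautology in view of the representation $D^s_{\d} = D \circ (Q^s_{\d} *)$, and the nonlocal half of part \ref{item:PiolaB} reduces by the same device to its local half.
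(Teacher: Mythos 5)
Your proof of part~\ref{item:PiolaA} coincides with the paper's: both apply Lemma~\ref{le:DsdD} to write $D^s_\d u = D(Q^s_\d*u)$ with $Q^s_\d*u \in C^\infty_c(\Rn,\Rn)$, invoke the classical Piola identity for the smooth function $Q^s_\d*u$ (and for $u$ itself), and then use $\diver^s_\d = Q^s_\d * \div$ row by row to get the nonlocal divergences. For part~\ref{item:PiolaB} you arrive at the same conclusion by a slightly longer route: you take $u_j \in C^\infty_c(\Rn,\Rn)$ with $u_j \to u$ in $H^{s,p,\d}$, show the strong $L^{p/(n-1)}(\O)$ convergence $\cof D^s_\d u_j \to \cof D^s_\d u$ via the algebraic bound $|\cof A - \cof B|\leq C(|A|^{n-2}+|B|^{n-2})|A-B|$ and H\"older (this is where $p\geq n-1$ enters), and then pass to the limit in the smooth-case integration by parts. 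In effect you re-derive the $W^{1,p}$ Piola identity; the paper instead simply observes $Q^s_\d * u \in W^{1,p}(\O,\Rn)$ and cites the known result for Sobolev functions (\cite[Lemma 6.1]{Ball77}, \cite[Prop.\ 3.2.4.1]{GiMoSo98I}) applied to $Q^s_\d * u$, which makes the argument a one-liner. Your route is more self-contained; the paper's is shorter and emphasizes that once the representation $D^s_\d = D\circ(Q^s_\d*\cdot)$ is in hand, the classical literature applies verbatim. Your final step — reducing the identity tested against $D^s_\d\f$ to the one tested against $D(Q^s_\d*\f)$, justified by the support inclusion $\spt\f + B(0,\d) \subset \O$ so that $Q^s_\d*\f \in C^\infty_c(\O)$ — matches the paper's handling of the nonlocal test function. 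Both proofs are correct.
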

\begin{proof}
For \ref{item:PiolaA}, using Lemma \ref{le:DsdD} we find that $\diver^s_{\d} \cof D^s_{\d} u = Q^s_{\d} * \diver (\cof D^s_{\d} u)$, with
\[
 \diver (\cof D^s_{\d} u) = \diver (\cof (D (Q^s_{\d} * u))) = 0 ,
\]
since $\diver (\cof (D (Q^s_{\d} * u))) = 0$ by the classical Piola identity, having in mind that $Q^s_{\d} * u \in C^{\infty}_c (\Rn, \Rn)$.
Similarly, $\diver^s_{\d} \cof D u = Q^s_{\d} * \diver \cof D u = 0$.

For \ref{item:PiolaB} we have, by Lemma \ref{le:DsdD} and Piola's identity for Sobolev functions (see, e.g., \cite[Lemma 6.1]{Ball77} or \cite[Prop.\ 3.2.4.1]{GiMoSo98I}), that, for any $\f \in C^{\infty}_c (\O_{-\d})$,
\[
 \int_{\O} \cof D^s_{\d} u \, D^s_{\d} \f \, dx = \int_{\O} \cof D (Q^s_{\d} * u) \, D (Q^s_{\d} * \f) \, dx = 0 ,
\]
since $Q^s_{\d} * u \in W^{1, p} (\O, \Rn)$ and $Q^s_{\d} * \f \in C^{\infty}_c (\O)$.
Similarly, for $\f \in C^{\infty}_c (\O)$,
\[
 \int_{\O} \cof D^s_{\d} u \, D \f \, dx = \int_{\O} \cof D (Q^s_{\d} * u) \, D \f \, dx = 0 .
\]
\end{proof}



The integration by parts of the determinant is as follows.

\begin{prop}
Let $p \geq n-1$ and $q \geq \frac{n}{n-1}$.
Let $u \in H^{s,p,\d} (\O, \Rn)$ be with $\cof D^s_{\d} u \in L^q (\O, \Rnn)$.
Then $\det D^s_{\d} u \in L^{\frac{q(n-1)}{n}} (\O)$ and for all $\f \in C^{\infty}_c (\O)$,
\begin{equation*}
 \int_{\O} \det D^s_{\d} u \, \f \, dx = - \frac{1}{n} \int_{\O} Q_\d^s * u \cdot \cof D_\d^s u \, D \f \, dx.
\end{equation*}
\end{prop}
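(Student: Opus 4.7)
The plan is to reduce the statement directly to the classical integration by parts for the determinant of a Sobolev map, by invoking the identification $D^s_\d u = D(Q^s_\d * u)$ from Lemma \ref{le:DsdD}\,\ref{item:DsdDb}. This is exactly the strategy the introduction advertises, and it gives the sharp exponents essentially for free from the corresponding sharp local result.

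First I would set $v := Q^s_\d * u$. By Lemma \ref{le:DsdD}\,\ref{item:DsdDb}, $v \in W^{1,p}(\O, \Rn)$ with $Dv = D^s_\d u$ a.e.\ in $\O$; since $\cof$ is a polynomial in the matrix entries, $\cof Dv = \cof D^s_\d u$ a.e.\ in $\O$, so in particular $\cof Dv \in L^q(\O, \Rnn)$ by hypothesis. I would then invoke the classical integration by parts for the determinant of a Sobolev map (as established in \cite{MuQiYa94} and surrounding literature on polyconvexity): under the assumptions $v \in W^{1,p}(\O,\Rn)$ with $p \geq n-1$ and $\cof Dv \in L^q(\O,\Rnn)$ with $q \geq n/(n-1)$, one has $\det Dv \in L^{q(n-1)/n}(\O)$ and
\[
 \int_{\O} \det Dv \, \f \, dx = -\frac{1}{n} \int_{\O} v \cdot \cof Dv \, D\f \, dx \qquad \text{for every } \f \in C^{\infty}_c(\O).
\]
The underlying identity is the distributional equality $n \det Dv = \div(v \cdot \cof Dv)$, which combines the classical Piola identity $\div \cof Dv = 0$ with the cofactor expansion $Dv : \cof Dv = n \det Dv$; since $\f$ has compact support in $\O$, no boundary terms arise.

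Finally, the substitutions $Dv = D^s_\d u$, $\cof Dv = \cof D^s_\d u$ and $v = Q^s_\d * u$ give simultaneously the claimed integrability $\det D^s_\d u \in L^{q(n-1)/n}(\O)$ and the stated integration by parts formula.

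The main, and in fact the only, nontrivial ingredient is the classical Sobolev version of the integration by parts of the determinant at the critical exponents $p \geq n-1$, $q \geq n/(n-1)$: at these exponents H\"older's inequality alone does not yield the integrability $L^{q(n-1)/n}$ of $\det Dv$, and one must rely on the distributional form of Piola's identity to read off the integrability from the formula itself. Once this local fact is granted, the passage to the nonlocal setting is immediate via Lemma \ref{le:DsdD}.
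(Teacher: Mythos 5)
Your proposal is correct and follows exactly the paper's own argument: set $v = Q^s_\d * u$, identify $Dv = D^s_\d u$ via Lemma \ref{le:DsdD}\,\ref{item:DsdDb}, and invoke the sharp classical integration by parts of the determinant from \cite[Th.\ 3.2]{MuQiYa94} at the exponents $p \geq n-1$, $q \geq \frac{n}{n-1}$. No gaps; the reduction and the citation are the same as in the paper's proof.
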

\begin{proof}
By Lemma \ref{le:DsdD} and the classical integration by parts of the determinant \cite[Th.\ 3.2]{MuQiYa94}, we have that $\det D (Q^s_{\d} * u) \in L^{\frac{q(n-1)}{n}} (\O)$ and
\[
\begin{split}
	  \int_{\O} \det D_\d^s u \, \f \, dx &= \int_{\O} \det D (Q_\d^s * u) \, \f \, dx \\
	  &= - \frac{1}{n} \int_{\O} Q_\d^s * u \cdot \cof D (Q_\d^s * u) \, D \f \, dx = - \frac{1}{n} \int_{\O} Q_\d^s * u \cdot \cof D_\d^s u \, D \f \, dx .
\end{split}
\]
\end{proof}

The weak continuity of minors is as follows.
As in the previous results, Lemma \ref{le:DsdD} reduces the analysis to the Sobolev case, and for this we use the sharpest result of the continuity of the determinant in this context, which is due to \cite{MuQiYa94} and \cite{HeMo10}.

\begin{teo}\label{th:wcontdet}
Let $p \geq n-1$ and $q \geq \frac{n}{n-1}$.
Let $\{ u_j \}_{j \in \N}$ be a sequence in $H^{s,p,\d} (\O, \Rn)$ such that $u_j \weakc u$ in $H^{s,p,\d} (\O, \Rn)$.
Then:
\begin{enumerate}[label=\alph*)]
\item\label{item:wcontdetA} If $k \in\N$ with $1 \leq k \leq n-2$ and $\mu$ is a minor of order $k$ then $\mu (D_\d^s u_j) \weakc \mu (D_\d^s u)$ in $L^{\frac{p}{k}} (\O)$ as $j \to \infty$.

\item\label{item:wcontcof} If $\cof D_\d^s u_j \weakc \vartheta$ in $L^1 (\O, \Rnn)$ for some $\vartheta \in L^q (\O, \Rnn)$ then $\vartheta = \cof D_\d^s u$.

\item\label{item:wcontdet}
If $\cof D_\d^s u_j \in L^q (\O, \Rnn)$ for each $j \in \N$, $\cof D_\d^s u_j \weakc \cof D_\d^s u$ in $L^1 (\O, \Rnn)$ and $\det D_\d^s u_j \weakc \t$ in $L^1 (\O)$ for some $\t \in L^1 (\O)$ then $\t = \det D^s_{\d} u$.

\end{enumerate}
\end{teo}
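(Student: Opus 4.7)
The strategy is to reduce all three statements to their classical Sobolev counterparts via Lemma \ref{le:DsdD}. Set $v_j := Q^s_{\d} * u_j$ and $v := Q^s_{\d} * u$. The first task is to verify that $v_j \weakc v$ in $W^{1,p}(\O, \Rn)$. Since $u_j \weakc u$ in $L^p (\O_\d, \Rn)$ and the linear map $w \mapsto Q^s_{\d} * w$ is bounded from $L^p (\O_\d)$ to $L^p (\O)$ by Lemma \ref{le:conv}, it preserves weak convergence, so $v_j \weakc v$ in $L^p (\O, \Rn)$. By Lemma \ref{le:DsdD}\,\ref{item:DsdDb}, $D v_j = D^s_{\d} u_j$ and $D v = D^s_{\d} u$ a.e.\ in $\O$, so the assumed weak convergence $D^s_{\d} u_j \weakc D^s_{\d} u$ in $L^p (\O, \Rnn)$ translates directly into $D v_j \weakc D v$ in $L^p (\O, \Rnn)$, giving $v_j \weakc v$ in $W^{1,p} (\O, \Rn)$.

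For \ref{item:wcontdetA}, I would apply the classical weak continuity of minors of order $k$ with $1 \le k \le n-2$ for Sobolev maps (see, e.g., \cite{Ball77}): since $p \geq n-1 > k$, one has $\mu (D v_j) \weakc \mu (D v)$ in $L^{p/k} (\O)$, which is exactly the claim after substituting $D v_j = D^s_{\d} u_j$ and $D v = D^s_{\d} u$.

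For \ref{item:wcontcof} and \ref{item:wcontdet}, I would invoke the sharp weak continuity theorems for the cofactor and determinant of Sobolev maps in the context of \cite{MuQiYa94} (with the refinement \cite{HeMo10}): under the hypotheses $p \geq n-1$ and $q \geq n/(n-1)$, if $\cof D v_j \weakc \vartheta$ in $L^1 (\O, \Rnn)$ with $\vartheta \in L^q (\O, \Rnn)$, then $\vartheta = \cof D v$; and if additionally $\det D v_j \weakc \t$ in $L^1 (\O)$ for some $\t \in L^1 (\O)$, then $\t = \det D v$. The identification of these limits is driven at the Sobolev level by the integration by parts for the cofactor (Piola's identity) and for the determinant, which we have already established in the nonlocal setting. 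The conclusions transfer back by the identities $D v_j = D^s_{\d} u_j$ and $D v = D^s_{\d} u$.

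The main obstacle would ordinarily be the delicate integrability and equi-integrability analysis needed for sharp exponents in \ref{item:wcontcof} and \ref{item:wcontdet}, but this is fully circumvented by the clean reduction $D^s_{\d} u = D(Q^s_{\d} * u)$: we simply inherit the sharp statements from the Sobolev literature rather than reproving them, which is precisely the advantage of the local-to-nonlocal viewpoint emphasized in the introduction.
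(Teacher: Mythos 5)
Your proposal is correct and follows essentially the same route as the paper: define $v_j = Q^s_{\d} * u_j$, $v = Q^s_{\d} * u$, use Lemma \ref{le:DsdD} to get $v_j \weakc v$ in $W^{1,p}(\O,\Rn)$ with $Dv_j = D^s_{\d}u_j$, $Dv = D^s_{\d}u$, and then quote the classical weak continuity of minors together with the sharp results of \cite{MuQiYa94} and \cite{HeMo10}. The only difference is that for part \ref{item:wcontdet} the paper spells out the intermediate step you leave implicit, namely that $\cof Dv_j \in L^q$ with $q \geq \frac{n}{n-1}$ yields the divergence identities of \cite{MuQiYa94}, equivalently zero surface energy in the sense of \cite{HeMo10}, which is what licenses the application of \cite[Th.\ 3]{HeMo10} to identify $\t = \det Dv$.
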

\begin{proof}
Define $v_j = Q^s_{\d} * u_j$ and $v = Q^s_{\d} * u$.
By Lemma \ref{le:DsdD}, we have $v_j \weakc v$ in $W^{1,p} (\O, \Rn)$, as well as $D v_j = D^s_{\d} u_j$ and $D v = D^s_{\d} u$.
Thus, we can apply a classical result on the convergence of minors (see, e.g., \cite[Th.\ 6.2]{Ball77} or \cite[Th.\ 8.20]{dacorogna}) to conclude that $\cof D v_j \weakc \cof D v$ in the sense of distributions and $\mu (D v_j) \weakc \mu (D v)$ in $L^{\frac{p}{k}} (\O)$ for every minor $\mu$ of order $k \in \{ 1, \ldots, n-2 \}$.
This proves \ref{item:wcontdetA}.
Under assumption \ref{item:wcontcof}, we have $\cof D v_j \weakc \vartheta$ in $L^1 (\O, \Rnn)$ so $\vartheta = \cof D v = \cof D_\d^s u$, which proves \ref{item:wcontcof}.

Suppose assumption \ref{item:wcontdet}.
As $\cof D v_j \in L^q (\O, \Rnn)$ for each $j \in \N$, we conclude that the \emph{divergence identities} of \cite[Th.\ 3.2]{MuQiYa94} hold for $v_j$.
This, in turn, is equivalent to the fact that the surface energy $\bar{\mathcal{E}}$ of $v_j$ defined in \cite[Def.\ 1]{HeMo10} is zero (see \cite[Prop.\ 3]{HeMo10}).
As observed in \cite[Sect.\ 2.5]{BaHeMo17}, the surface energy $\mathcal{E}$ of $v_j$ defined in \cite[Def.\ 2]{HeMo10} is also zero.
Thus, we can apply \cite[Th.\ 3]{HeMo10} and conclude that $\t = \det D v = \det D^s_{\d} u$.
\end{proof}

\section[Existence of minimizers]{Existence of minimizers and Euler--Lagrange equations}\label{se:existence}

In this section we prove the existence of minimizers in $H^{s,p,\d}$ of functionals of the form
\begin{equation}\label{eq:I}
 I (u) := \int_{\O} W (x, u(x), D^s_\d u (x)) \, dx .
\end{equation}
under natural coercivity and polyconvexity assumptions.
We also derive the associated Euler--Lagrange equation, which is a partial nonlocal-differential system of equations.

We recall the concept of polyconvexity (see, e.g, \cite{Ball77,dacorogna}).
Let $\tau$ be the number of submatrices of an $n \times n$ matrix.
We fix a function $\mu : \Rnn \to \R^{\tau}$ such that $\mu (F)$ is the collection of all minors of an $F \in \Rnn$ in a given order.
A function $W_0 : \Rnn \to \R \cup \{\infty\}$ is polyconvex if there exists a convex $\Phi : \R^{\tau} \to \R \cup \{\infty\}$ such that $W_0 (F) = \Phi (\mu (F))$ for all $F \in \Rnn$.

The existence theorem is as follows.
Its proof relies on a standard argument in the Calculus of variations, once we have the continuity (with respect to the weak convergence) of the minors given by Theorem \ref{th:wcontdet}.

\begin{teo}\label{th:existence}
Let $p \geq n-1$ satisfy $p>1$.
Let $q \geq \frac{n}{n-1}$.
Let $W : \Rn \times \Rn \times \Rnn \to \R \cup \{ \infty \}$ satisfy the following conditions:
\begin{enumerate}[label=\alph*)]
\item $W$ is $\mc{L}^n \times \mc{B}^n \times \mc{B}^{n\times n}$-measurable, where $\mc{L}^n$ denotes the Lebesgue sigma-algebra in $\Rn$, whereas $\mc{B}^n$ and $\mc{B}^{n\times n}$ denote the Borel sigma-algebras in $\Rn$ and $\Rnn$, respectively.

\item $W (x, \cdot, \cdot)$ is lower semicontinuous for a.e.\ $x \in \Rn$.

\item For a.e.\ $x \in \Rn$ and every $y \in \Rn$, the function $W (x, y, \cdot)$ is polyconvex.

\item\label{item:Ecoerc}
There exist a constant $c>0$, an $a \in L^1 (\O)$ and a Borel function $h : [0, \infty) \to [0, \infty)$ such that
\[
 \lim_{t \to \infty} \frac{h (t)}{t} = \infty
\]
and
\[
 W (x, y, F) \geq a (x) + c \left| F \right|^p + c \left| \cof F \right|^q + h (\left| \det F \right|)
\]
for a.e.\ $x \in \O$, all $y \in \Rn$ and all $F \in \Rnn$.
\end{enumerate}
Let $u_0 \in H^{s,p,\d} (\O, \Rn)$.
Define $I$ as in \eqref{eq:I}, and assume that $I$ is not identically infinity in $H^{s,p,\d}_{u_0} (\O_{-\d}, \Rn)$.
Then there exists a minimizer of $I$ in $H^{s,p,\d}_{u_0} (\O_{-\d},\Rn)$.
\end{teo}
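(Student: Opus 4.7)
The plan is the direct method of the calculus of variations. Since $I \not\equiv +\infty$ on the admissible class $H^{s,p,\d}_{u_0}(\O_{-\d}, \Rn)$ and is bounded below by the coercivity assumption \ref{item:Ecoerc}, the infimum $m := \inf I$ is finite. Fix a minimizing sequence $\{u_j\}_{j \in \N}$.

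First I derive compactness. From \ref{item:Ecoerc} I obtain uniform bounds on $\|D^s_\d u_j\|_{L^p(\O)}$ and $\|\cof D^s_\d u_j\|_{L^q(\O)}$, plus equi-integrability of $\det D^s_\d u_j$ in $L^1(\O)$ by the de la Vall\'ee--Poussin criterion applied to $h$. Since $u_j - u_0 \in H^{s,p,\d}_0(\O_{-\d}, \Rn)$, Theorem \ref{th:PoincareSobolev delta} bounds $\|u_j - u_0\|_{L^p(\O)}$ in terms of $\|D^s_\d(u_j - u_0)\|_{L^p(\O)}$, and on $\O_\d \setminus \O_{-\d}$ these functions equal $u_0$, so $\{u_j\}$ is uniformly bounded in $H^{s,p,\d}(\O, \Rn)$. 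By reflexivity (Proposition \ref{prop: espacio separable y reflexivo}) a subsequence satisfies $u_j \weakc u$ in $H^{s,p,\d}(\O, \Rn)$; Theorem \ref{Hspdelta embedding theorem} places $u$ in $H^{s,p,\d}_{u_0}(\O_{-\d}, \Rn)$ and yields $u_j \to u$ strongly in $L^r(\O, \Rn)$ for any admissible subcritical $r$, hence, along a further subsequence, a.e.\ in $\O$.

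Next I identify the weak limits of all minors of $D^s_\d u_j$. Passing to further subsequences I may assume $\cof D^s_\d u_j \weakc \vartheta$ in $L^q(\O, \Rnn)$, while the Dunford--Pettis theorem together with the equi-integrability above gives $\det D^s_\d u_j \weakc \t$ in $L^1(\O)$. Theorem \ref{th:wcontdet} then furnishes $\mu(D^s_\d u_j) \weakc \mu(D^s_\d u)$ coordinate by coordinate: item \ref{item:wcontdetA} for minors of intermediate order, item \ref{item:wcontcof} to conclude $\vartheta = \cof D^s_\d u$, and item \ref{item:wcontdet} to conclude $\t = \det D^s_\d u$. In particular, the full vector $\mu(D^s_\d u_j)$ converges weakly in $L^1(\O, \R^{\tau})$ to $\mu(D^s_\d u)$.

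To finish, write the polyconvex integrand as $W(x, y, F) = \Phi(x, y, \mu(F))$ with $\Phi(x, y, \cdot)$ convex and lower semicontinuous (and $\Phi$ measurable/lower semicontinuous in the remaining variables by standard measurable selection), and invoke a classical lower semicontinuity theorem of Ioffe type (see, e.g., \cite{dacorogna}) applied with the strong convergence $u_j \to u$ in $L^1(\O, \Rn)$ and the weak $L^1$ convergence of $\mu(D^s_\d u_j)$ just established. This yields $I(u) \leq \liminf_j I(u_j) = m$, so $u$ is a minimizer. The delicate ingredient is the weak $L^1$ convergence of the Jacobian determinant: the super-linear function $h$ in \ref{item:Ecoerc} is precisely what renders $\{\det D^s_\d u_j\}$ equi-integrable and hence activates Theorem \ref{th:wcontdet}\ref{item:wcontdet} to identify its weak limit as $\det D^s_\d u$; without such growth the argument would collapse at this stage.
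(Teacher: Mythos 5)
Your proposal is correct and follows essentially the same route as the paper: the direct method with coercivity-derived bounds, the Poincar\'e inequality of Theorem \ref{th:PoincareSobolev delta} applied to $u_j-u_0$, reflexivity and the compact embedding of Theorem \ref{Hspdelta embedding theorem}, identification of the weak limits of all minors via Theorem \ref{th:wcontdet} (with de la Vall\'ee Poussin/Dunford--Pettis for the determinant), and a standard polyconvex lower semicontinuity theorem. The paper cites the semicontinuity results of Ball--Currie--Olver and Fonseca--Leoni rather than an Ioffe-type statement, but this is only a difference in reference, not in substance.
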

\begin{proof}
Assumption \ref{item:Ecoerc} shows that the functional $I$ is bounded below by $\int_{\O} a$.
As $I$ is not identically infinity in $H^{s,p,\d}_{u_0} (\O, \Rn)$, there exists a minimizing sequence $\{ u_j \}_{j \in \N}$ of $I$ in $H^{s,p,\d}_{u_0} (\O, \Rn)$.
Assumption \ref{item:Ecoerc} implies that $\{ D^s_\d u_j \}_{j \in \N}$ is bounded in $L^p (\O, \Rnn)$.
Applying Theorem \ref{th:PoincareSobolev delta} to $u_j - u_0$, we obtain that $\{ u_j \}_{j \in \N}$ is bounded in $L^p (\O, \Rn)$ and, consequently, also in $H^{s,p,\d} (\O, \Rn)$.
Using that $H^{s,p,\d} (\O, \Rn)$ is reflexive (Proposition \ref{prop: espacio separable y reflexivo}) and Theorem \ref{Hspdelta embedding theorem}, there exists $u \in H^{s,p,\d}_{u_0} (\O_{-\d}, \Rn)$ such that for a subsequence (not relabelled),
\begin{equation*}
 u_j \weakc u \text{ in } H^{s,p,\d} (\O, \Rn) \quad \text{and} \quad u_j \to u \text{ in } L^p (\O, \Rn) .
\end{equation*}
By Theorem \ref{th:wcontdet}, for any minor $\mu$ of order $k \leq n-2$, we have that
\begin{equation*}
 \mu (D^s_\d u_j) \weakc \mu (D^s_\d u) \text{ in } L^{\frac{p}{k}} (\O) .
\end{equation*}
Moreover, assumption \ref{item:Ecoerc} shows that $\{ \cof D^s_\d u_j \}_{j \in \N}$ is bounded in $L^q (\O, \Rnn)$, and, as $q >1$, we can extract a weakly convergent subsequence, which by Theorem \ref{th:wcontdet} tends to $\cof D^s_\d u$.
Again by \ref{item:Ecoerc}, together with de la Vall\'ee Poussin's criterion, we have that $\{ \det D^s_\d u_j \}_{j \in \N}$ is equiintegrable, so, for a subsequence, it converges weakly in $L^1 (\O)$ to a function that, by Theorem \ref{th:wcontdet} has to be  $\det D^s_\d u$.

The above convergences imply, thanks to a standard lower semicontinuity result for polyconvex functionals (see, e.g., \cite[Th.\ 5.4]{BallCurrieOlver} or \cite[Th.\ 7.5]{FoLe07}), that
\[
 I(u) \leq \liminf_{j \to \infty} I(u_j) .
\]
Therefore, $u$ is a minimizer of $I$ in $H^{s,p,\d}_{u_0} (\O_{-\d}, \Rn)$.
\end{proof}

It is well known that Sobolev functions cannot have discontinuities along $(n-1)$-dimensional surfaces (see, e.g., \cite[Thms.\ 4.7.4, 4.8.1 and 5.6.3]{EvGa92}).
On the other hand, depending on the exponent of integrability of $Du$ and $\cof Du$, a Sobolev function $u$ can or cannot have discontinuities of cavitation type.
In fact, under the assumptions of \cite{MuQiYa94}, the deformation cannot exhibit cavitation.
But, as a consequence of the analysis of \cite[Sect.\ 2.1]{BeCuMC} and \cite[Prop.\ 3.5]{BeCuMo22} (see also \cite[Prop.\ 1.8]{CoSt23}), functions in $H^{s,p,\d} (\O)$ can exhibit fracture in the range of exponents $s p < 1$, and cavitation in the range of exponents $s p < n$.
Both ranges are compatible with the assumptions of Theorem \ref{th:existence}.

This is in contrast with the Sobolev or $SBV$ (special bounded variation) case.
Indeed, the classical space for fracture is $SBV$ and the energy to minimize requires a contribution of the surface energy (see, e.g., \cite{AmFuPa00}).
On the other hand, it was shown in \cite{HeMo11,HeMo12} that the divergence identities of \cite{MuQiYa94} correspond to the absence of cavitation, while these divergence identities were essential to prove Theorem \ref{th:wcontdet}.
The explanation of this apparent paradox is that the key of the proof of Theorem \ref{th:wcontdet} is that $Q^s_{\d} * u$ does not exhibit cavitation or fracture, regardless of whether $u$ does.
In fact, in many proofs of this article we invoke known results to the regularized version $Q^s_{\d} * u$ of $u$.

To sum up, Theorem \ref{th:existence}, which determines the existence of minimizers of a nonlocal hyperelastic energy, is compatible with functions exhibiting fracture or cavitation, in opposition to the case of classical elasticity (see, e.g., \cite{Ball77,Sverak88,MuQiYa94,Ball01,Ball02,BaHeMo17}).
Thanks to the sharpness of exponents $p,q$ of Theorem \ref{th:existence}, the range of exponents $p,q,s$ for which fracture or cavitation are compatible with the existence of minimizers is wider than that of the fractional case \cite{BeCuMC}.

To finish the article, we show the equilibrium (Euler--Lagrange) equations that minimizers of functional \eqref{eq:I} satisfy.
The notation for partial derivatives is as follows: $D_y W (x, \cdot, F)$ is the derivative of $W (x, \cdot, F)$, and $D_F W (x, y, \cdot)$ is the derivative of $W (x, y, \cdot)$.

\begin{teo}
Let $1 < p < \infty$.
Let $u_0 \in H^{s,p,\d} (\O)$.
Let $W : \O \times \Rn \times \Rnn \to \R$ satisfy the following conditions:
\begin{enumerate}[label=\alph*)]
\item $W (\cdot , y, F)$ is $\mc{L}^n$-measurable for each $y \in \Rn$ and $F \in \Rnn$.

\item $W (x, \cdot, \cdot)$ is of class $C^1$ for a.e.\ $x \in \O$.

\item
There exist $c>0$ and $a \in L^1 (\O)$ such that some of the following inequalities holds for a.e.\ $x \in \O$, all $y \in \Rn$ and all $F \in \Rnn$:
\begin{enumerate}[label=c\arabic*)]
\setcounter{enumii}{-1}
\item\label{item:c0}
\begin{equation*}
\left| W (x, y, F) \right| + \left| D_y W (x, y, F) \right| + \left| D_F W (x, y, F) \right| \leq a (x) + c \left( \left| y \right|^p + \left| F \right|^p \right) ,
\end{equation*}

\item\label{item:c1} $s p < n$, $u_0 \in L^r (\O_{\d}, \Rn)$ with $1 \leq r \leq p_s^*$ and
\[
 \left| W (x, y, F) \right| + \left| D_y W (x, y, F) \right| + \left| D_F W (x, y, F) \right| \leq a (x) + c \left( \left| y \right|^r + \left| F \right|^p \right) .
\]

\item\label{item:c2} $s p = n$, 
\[
 \int_{\O_{\d}} \exp \left( \left|u_0 (x)\right|^r \right) dx < \infty 
\]
for some $1 \leq r < p'$ and 
\[
 \left| W (x, y, F) \right| + \left| D_y W (x, y, F) \right| + \left| D_F W (x, y, F) \right| \leq a (x) + \exp (|y|^r) + c \left| F \right|^p .
\]

\item\label{item:c3} $s p > n$, $u_0 \in L^{\infty} (\O_{\d}, \Rn)$ and there exists a function $f : [0, \infty) \to [0, \infty)$ sending bounded sets in bounded sets such that
\[
 \left| W (x, y, F) \right| + \left| D_y W (x, y, F) \right| + \left| D_F W (x, y, F) \right| \leq a (x) + f ( |y| ) + c \left| F \right|^p .
\]
\end{enumerate}
\end{enumerate}
Define $I$ as in \eqref{eq:I}.
Let $u$ be a minimizer of $I$ in $H_{u_0}^{s,p,\d} (\O_{-\d})$.
Then, for every $\f \in C^{\infty}_c (\O_{-\d})$,
\begin{equation*}
 \int_{\O} \left[ D_y W (x, u(x), D^s_\d u (x)) \, \f (x) + D_F W (x, u(x), D^s_\d u (x)) \cdot D^s_\d \f (x) \right] dx = 0 .
\end{equation*}
If, in addition, $D_z W (\cdot, u(\cdot), D^s_\d u (\cdot)) \in C^1 (\overline{\O_{-\d}}, \Rn)$ then
\begin{equation*}
 D_y W (x, u(x), D^s_\d u (x)) = \diver^s_{\d} D_F W (x, u(x), D^s_\d u (x))
\end{equation*}
for a.e.\ $x \in \O_{-\d}$.
\end{teo}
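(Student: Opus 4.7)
The plan is to follow the classical first-variation scheme, adapted to the nonlocal setting. For $\varphi \in C_c^\infty(\O_{-\d}, \Rn)$ we have $\varphi \in H_0^{s,p,\d}(\O_{-\d}, \Rn)$, so the curve $t \mapsto u + t\varphi$ lies in $H_{u_0}^{s,p,\d}(\O_{-\d}, \Rn)$ for every $t \in \R$. Since $u$ is a minimizer, the scalar function $g(t) := I(u + t\varphi)$ attains its minimum at $t=0$; establishing that $g$ is differentiable there yields $g'(0)=0$, which will be rearranged into the weak form.

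To justify differentiation under the integral sign I would apply the dominated convergence theorem to the difference quotient, which by the mean value theorem equals
$$D_y W(x, u + \theta_t \varphi, D_\d^s u + \theta_t D_\d^s \varphi) \cdot \varphi + D_F W(x, u + \theta_t \varphi, D_\d^s u + \theta_t D_\d^s \varphi) \cdot D_\d^s \varphi$$
for some $\theta_t$ with $|\theta_t| \leq |t|$. The growth hypotheses c0)--c3) are tailored to furnish an integrable dominator: under c0) the bound on $|D_y W| + |D_F W|$ is controlled by constants times $|u|^p$, $|\varphi|^p$, $|D_\d^s u|^p$, $|D_\d^s \varphi|^p$, all integrable since $u \in L^p(\O_\d)$, $D_\d^s u \in L^p(\O)$, and $\varphi$, $D_\d^s\varphi$ are bounded with compact support; under c1) I would use Theorem \ref{th:PoincareSobolev delta} applied to $u - u_0 \in H_0^{s,p,\d}(\O_{-\d}, \Rn)$ together with $u_0 \in L^r(\O_\d, \Rn)$ to secure $u \in L^r$; under c2) I would invoke the Trudinger-type exponential integrability associated with the critical case $sp=n$, so that $\exp(|u|^r) \in L^1(\O)$ whenever $r < p'$; and under c3) the $L^\infty$ embedding of Theorem \ref{th:PoincareSobolev delta} makes $u$ bounded, rendering the dominator bounded. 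Sending $t \to 0$ produces the weak formulation
$$\int_\O \left[ D_y W \cdot \varphi + D_F W \cdot D_\d^s \varphi \right] dx = 0.$$

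For the strong form, set $\Psi(x) := D_F W(x, u(x), D_\d^s u(x))$ and assume $\Psi \in C^1(\overline{\O_{-\d}}, \Rnn)$. I would apply the nonlocal integration-by-parts formula of Section \ref{se:functional} componentwise, with the scalar playing the role of $\varphi_i \in C_c^\infty(\O_{-\d})$ and the vector field being the $i$-th row $\Psi_i$. The nonlocal boundary term takes the form $\int_\O \int_{\O_{B,\d}} \varphi_i(y) \, (\cdots) \, dy \, dx$, and since $\spt \varphi_i \subset \O_{-\d} \subset \O$ is disjoint from $\O_{B,\d} = \O_\d \setminus \O$, this contribution vanishes identically. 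Hence
$$\int_\O D_F W \cdot D_\d^s \varphi \, dx = -\int_\O \diver_\d^s D_F W \cdot \varphi \, dx,$$
and substituting into the weak form gives $\int_\O (D_y W - \diver_\d^s D_F W) \cdot \varphi \, dx = 0$ for every $\varphi \in C_c^\infty(\O_{-\d}, \Rn)$. The fundamental lemma of the calculus of variations then yields the pointwise identity a.e.\ in $\O_{-\d}$.

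The main obstacle I anticipate is constructing the integrable dominator in the borderline regime c2), which requires an exponential embedding in $H^{s,p,\d}$ at $sp = n$: one must verify that the Trudinger-type argument carries through in this nonlocal functional space. A secondary technical difficulty is that the integration-by-parts theorem of Section \ref{se:functional} is stated for vector fields in $C_c^1(\O, \Rn)$, whereas $\Psi$ is only $C^1$ on $\overline{\O_{-\d}}$; an extension-and-cutoff argument is needed to place $\Psi$ in the admissible class without modifying its values on a $\d$-neighborhood of $\spt \varphi$, so that the identity we rely upon remains unaffected.
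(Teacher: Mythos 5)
Your proposal takes essentially the same route as the paper. The paper's proof is terse: it cites [Th.\ 8.2, BeCuMo22] for the scalar case under assumption c0), notes the vectorial case is identical, and for c1)--c3) invokes Theorem \ref{th:PoincareSobolev delta} (and, for the critical case $sp=n$, the Trudinger-type inequality of \cite[Th.\ 6.4]{BeCuMo22}) to obtain the additional integrability of $u$ needed to apply dominated convergence when differentiating $t \mapsto I(u+t\varphi)$ under the integral sign. Your first-variation argument with the mean-value theorem and the case-by-case construction of the dominator is exactly what that cited theorem carries out, and you correctly flag the exponential embedding as the crux of c2). Your derivation of the strong form from the nonlocal integration by parts, with the boundary term over $\O_{B,\d}$ vanishing because $\spt\f \subset \O_{-\d}$, and your observation that an extension/cutoff is needed to place $D_F W(\cdot,u,D_\d^s u)$ in the admissible class $C^1_c(\O,\Rn)$ without disturbing its values near $\spt\f$, are the right technical points; the paper does not spell these out but they are implicit in the reference to the scalar prototype.
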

\begin{proof}
The proof for the scalar case and under assumption \ref{item:c0} was shown in \cite[Th.\ 8.2]{BeCuMo22}.
The proof for the vectorial case requires no essential modification, so we just mention the small adaptations for cases \ref{item:c1}--\ref{item:c3}.

Thanks to Theorem \ref{th:PoincareSobolev delta}, $u \in L^r (\O_{\d}, \Rn)$ under assumption \ref{item:c1},
\[
 \int_{\O_{\d}} \exp \left( \left|u (x)\right|^r \right) dx < \infty 
\]
under assumption \ref{item:c2}, and $u \in L^{\infty} (\O_{\d}, \Rn)$ under assumption \ref{item:c3}.
Actually, case \ref{item:c2} requires the Trudinger inequality of \cite[Th.\ 6.4]{BeCuMo22}.
The corresponding bounds of \ref{item:c1}--\ref{item:c3} allow us to apply dominated convergence to differentiate under the integral sign.
This concludes the proof.
\end{proof}

\section*{Acknowledgements} 

This work has been supported by the Spanish {\it Agencia Estatal de Investigaci\'on, Ministerio de Ciencia e Innovaci\'on} through projects PID2020-116207GB-I00 (J.C.B. and J.C.), and PID2021-124195NB-C32 (C.M.-C.), {\it Junta de Comunidades de Castilla-La Mancha} through grant SBPLY/19/180501/000110 and European Regional Development Fund 2018/11744 (J.C.B. and J.C.).
J.C.\ is also supported by Fundaci\'on Ram\'on Areces.

\bibliography{bibliography}{}
\bibliographystyle{siam} 

\end{document}